\theoremstyle{plain}
\newtheorem{thm}{Theorem}
\newtheorem*{thm*}{Theorem}
\newtheorem{lem}{Lemma}
\newtheorem{defin}{Definition}
\newtheorem{prop}{Proposition}
\theoremstyle{definition}
\newtheorem{rem}{Remark}
\newtheorem{ques}{Question}
\newcommand{\nc}{\newcommand}
\nc\bR{\mathbb{R}}
\nc\bZ{\mathbb{Z}}
\newcommand{\calH}{{\mathcal H}}
\newcommand{\R}{{\mathbb R}}
\newtheorem{cor}{Corollary}
\title[Circle averages and disjointness]{Circle averages and disjointness in typical translation surfaces on every Teichm\"uller disc}
\begin{document}

\author{Jon Chaika, Pascal Hubert}

%\address{}

%\email{}
\address{
University of Utah 
Department of Mathematics, 203
155 S 1400 E, Room 233
Salt Lake City, UT, 84112-0090
USA}
\address{I2M, Centre de Math\'ematiques et Informatique (CMI),
Universit\'e Aix-Marseille, 39 rue Joliot Curie, 13453 Marseille Cedex 13, France.}
\email{chaika@math.utah.edu}
\email{pascal.hubert@univ-amu.fr}
\maketitle
\begin{abstract}
We prove that on the typical translation  surface the flow in almost every pair of directions are not isomorphic to each other and are in fact disjoint. It was not known if there were any  translation  surfaces other than torus covers with this property. We provide an application to the convergence of `circle averages' for the flow (away from a sequence of radii of density 0) for such surfaces. Even the density of a sequence of 'circles' was only known in a few special examples. MSC classes: 	37A10, 37A25, 37A34, 37E35
\end{abstract}
The illumination problem is a classical one in billiard theory (see for instance \cite{LMW} and references therein). A light source is located at some point in the billiard table: we wonder which part of the table is eventually illuminated.
 This question has  recently been solved in full generality for rational polygons and translation surfaces by Leli\`evre, Monteil and Weiss \cite{LMW} using deep results of Eskin and Mirzakhani on moduli spaces of translation surfaces \cite{Eskin-Mirzakhani}. Here we tackle a related question. We want to understand how big light ``circles" distribute in translation surfaces. This question was orally addressed more than 10 years ago by Boshernitzan to the second named author. It was also asked in \cite[Section 0.1.5 Page 13]{Mon}. 
 
  Formally, let $(X,\omega)$ denote a compact translation surface (with distinguished vertical direction). Let $F_{2\pi\theta,\omega}^t$ denote the linear flow in direction $\theta$ at time $t$ on $(X,\omega)$  and $\lambda_\omega$ denote the (2-dimensional) area on $(X,\omega)$ normalized to have area 1. \footnote{In the sequel, we will abreviate $(X,\omega)$ by $\omega$ and $F_{2\pi\theta,\omega}^t$ by $F_{2\pi\theta}^t$ when no confusion is possible. $F_\omega$ is the vertical flow on $(X,\omega)$.}

\begin{defin}A translation surface is \emph{illuminated by circles} if $$\underset{t \to \infty}\lim \, \int_0^1 h(F_{2\pi\theta}^tp) d\theta=\int_X h d\lambda_\omega$$ for all  points  $p$ in $X$ and  all continuous functions $h$.
\end{defin}
%\textit{Something about the now solved illumination problem. Provides motivation...}

\begin{ques}Is the typical surface illuminated by circles? Is every surface illuminated by circles?
\end{ques}
It is easy to see that a flat torus is illuminated by circles since a piece of a large circle has small curvature and can be approximated by a segment. For a translation surface, this is an open problem. The main difference is that a big ``circle" on a translation surface of higher genus is a union of disjoint small arcs. The size of each arc decreases when the radius of the circle grows.

We prove a partial result that requires  definitions:

\begin{defin} Let $A \subset \mathbb{R}$. The \emph{density of $A$} is $\underset{N \to \infty}{\liminf}\, \frac{\lambda(A \cap [-N,N])}{2N}$. 
\end{defin}
\begin{defin} A surface is \emph{weakly illuminated by circles} if for each $p$ there exists a set   of density 1, $G_p\subset \mathbb{R}$ so that 
\begin{equation}\label{eq:illuminate}
\underset{t \in G_p}{\lim} \int_0^1 h(F^t_{2\pi \theta}(p))d\theta=\int hd\lambda_\omega
\end{equation}
 for all  continuous functions  $h$.
\end{defin}

\begin{thm}\label{thm:weakly seen}
Almost every surface is weakly illuminated by circles. \footnote{Almost is respect to any $SL_2(\mathbb{R})$ invariant probability measure in a moduli space of compact translation surfaces.}
 In fact, if $\omega$ is a translation surface then for almost every $A \in SL_2(\mathbb{R})$, $A\omega$ is weakly illuminated by circles.
\end{thm}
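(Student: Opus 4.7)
The plan is to reduce weak illumination to an $L^2$ estimate for the circle-average function
\[
I(t):=\int_0^1 h\bigl(F^t_{2\pi\theta}p\bigr)\,d\theta
\]
and then extract a density-$1$ set of times via the Koopman-von Neumann lemma, uniformly over a countable dense family of continuous $h$. By the main disjointness theorem of the paper, for a.e. $A\in SL_2(\mathbb{R})$ the surface $A\omega$ has the property that for Lebesgue-a.e. pair $(\theta_1,\theta_2)\in[0,1]^2$ the directional flows $F_{2\pi\theta_1,A\omega}$ and $F_{2\pi\theta_2,A\omega}$ are disjoint; moreover, by Kerckhoff-Masur-Smillie applied along $SO(2)$-orbits in moduli space, for a.e. $\theta$ the flow $F_{2\pi\theta,A\omega}$ is uniquely ergodic with invariant measure $\lambda_{A\omega}$. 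Fix such $A$, drop it from notation, and write $\bar h=\int h\,d\lambda$.

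The target estimate is $\frac{1}{2N}\int_{-N}^N (I(t)-\bar h)^2\,dt \to 0$. Expanding the square and applying Fubini, the linear term $\frac{1}{2N}\int_{-N}^N I(t)\,dt = \int_0^1\bigl(\frac{1}{2N}\int_{-N}^N h(F^t_{2\pi\theta}p)\,dt\bigr)d\theta$ tends to $\bar h$ by unique ergodicity of $F_{2\pi\theta}$ for a.e. $\theta$ and dominated convergence (the inner average is bounded by $\|h\|_\infty$). The crux is the quadratic term
\[
\int_0^1\int_0^1 \left(\frac{1}{2N}\int_{-N}^N h\bigl(F^t_{2\pi\theta_1}p\bigr)\,h\bigl(F^t_{2\pi\theta_2}p\bigr)\,dt\right)d\theta_1\,d\theta_2 \to \bar h^2.
\]
By dominated convergence, this reduces to showing, for a.e. pair $(\theta_1,\theta_2)$, that the diagonal time average tends to $\bar h^2$ for our specific $p$. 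To this end, let $\nu$ be any weak-$*$ limit of the empirical measures $\frac{1}{2N}\int_{-N}^N\delta_{(F^t_{2\pi\theta_1}p,F^t_{2\pi\theta_2}p)}\,dt$ on $X\times X$: $\nu$ is invariant under the diagonal flow and has marginals $\lambda,\lambda$ by unique ergodicity. Since the two directional flows commute (both come from the local $\mathbb{R}^2$-translation structure on $X$), Cesàro-averaging $\nu$ along $F^s_{2\pi\theta_1}\times\mathrm{id}$ produces a measure invariant under the full $\mathbb{R}^2$-action, i.e. a bona fide joining, which equals $\lambda\otimes\lambda$ by disjointness. Pushing this through identifies $\int(h\otimes h)\,d\nu=\bar h^2$, giving the $L^2$ estimate.

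Granted the $L^2$ estimate, Koopman-von Neumann yields, for each fixed $h$, a density-$1$ set $G_{p,h}\subset\mathbb{R}$ on which $I(t)\to\bar h$. To obtain a single $G_p$ that works simultaneously for every $h\in C(X)$, pick a countable dense $\{h_n\}\subset C(X)$ and form the aggregate
\[
\phi(t):=\sum_{n\ge 1} 2^{-n}\min\bigl(1,\,\bigl|I_{h_n}(t)-\bar h_n\bigr|\bigr).
\]
Dominated convergence in $n$ gives $\frac{1}{2N}\int_{-N}^N\phi(t)\,dt\to 0$, and a final application of Koopman-von Neumann produces a density-$1$ set $G_p$ along which $\phi(t)\to 0$; this forces $I_{h_n}(t)\to\bar h_n$ for every $n$, and uniform approximation extends the conclusion to all continuous $h$.

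The main obstacle is the diagonal limit for a.e. pair of directions. Disjointness controls $\mathbb{R}^2$-invariant measures on $X\times X$, but the empirical measure $\nu$ along a single diagonal orbit is only $\mathbb{R}$-invariant a priori: the Cesàro-averaging trick above identifies the \emph{average} of the $\nu$-translates as $\lambda\otimes\lambda$ without directly pinning down $\nu$ itself. Bridging this gap — presumably by combining disjointness with the uniform convergence afforded by unique ergodicity of each coordinate flow, so that the averaging argument in fact identifies $\nu$ and not merely its own time average — is the real technical content of this step, and is where the specific joint-ergodicity information produced by the paper's disjointness theorem (beyond its abstract statement) must enter.
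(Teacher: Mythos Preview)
Your overall architecture matches the paper's exactly: reduce to an $L^2$ (in $t$) estimate for the circle average via Fubini and the pairwise product flows, then pass to a single density-$1$ set via Koopman--von Neumann applied to a weighted sum over a countable dense family in $C(X)$. The paper's version (Proposition~\ref{prop:seb}) is slightly cleaner in that it takes $f$ with $\int f=0$ and bounds $\frac{1}{T}\int_0^T |I(t)|^2\,dt$ directly, sparing your separate treatment of the linear term, but this is cosmetic.

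The obstacle you describe in your final paragraph, however, is not real, and comes from two related misreadings.

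First, a joining of $(F_{\theta_1},\lambda)$ and $(F_{\theta_2},\lambda)$ is by definition a probability on $X\times X$ invariant under the \emph{diagonal} $\mathbb{R}$-action $(F_{\theta_1}^t,F_{\theta_2}^t)$ with marginals $\lambda,\lambda$; it has nothing to do with the full $\mathbb{R}^2$-action. Your weak-$*$ limit $\nu$ is diagonal-invariant by construction, and you already argued its marginals are both $\lambda$ (from unique ergodicity of each factor flow). Hence $\nu$ is already a joining, and disjointness forces $\nu=\lambda\otimes\lambda$ immediately --- no Ces\`aro averaging to an $\mathbb{R}^2$-invariant measure is needed, and there is nothing further to bridge.

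Second, and more directly, the paper's Theorem~\ref{thm:typ disjoint} asserts more than disjointness: for a.e.\ pair $(\theta_1,\theta_2)$ the product flow $F_{\theta_1}\times F_{\theta_2}$ is \emph{uniquely ergodic}. This is precisely what the paper invokes in Proposition~\ref{prop:seb}: unique ergodicity forces the Birkhoff averages of the continuous function $G(x,y)=h(x)\bar h(y)$ to converge to $\int G\,d(\lambda\otimes\lambda)$ from \emph{every} initial point with infinite forward orbit, in particular from $(p,p)$. With this input the quadratic-term computation is one line.

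So your plan is correct and is essentially the paper's argument; the difficulty you flag arises only from misremembering the definition of a joining and from understating what Theorem~\ref{thm:typ disjoint} actually gives you.
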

The weaker question of whether circles became dense was also open (and is resolved by the previous theorem). That is:
\begin{cor} Almost every surface \footnote{Almost is respect to any $SL_2(\mathbb{R})$ invariant probability measure in a moduli space of compact translation surfaces.}  has the property that for any  $\epsilon>0$ there is a $T$ so that $\cup_{\theta \in 2\pi} F^T_{\theta}(p)$ is $\epsilon$ dense (in the usual flat metric on the surface).
\end{cor}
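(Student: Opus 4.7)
The plan is to deduce the corollary directly from Theorem~\ref{thm:weakly seen} by recognising weak illumination by circles as weak-$\ast$ convergence of a family of probability measures on $X$ and then applying the Portmanteau theorem. By Theorem~\ref{thm:weakly seen}, it suffices to work with a surface $\omega$ that is weakly illuminated by circles, since the exceptional set is of measure zero for any $SL_2(\mathbb{R})$-invariant probability measure.

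Fix such an $\omega$ and a point $p \in X$. For each $t \geq 0$, define a Borel probability measure $\mu_t$ on $X$ by
\[
\int_X h \, d\mu_t = \int_0^1 h\bigl(F^t_{2\pi\theta}(p)\bigr)\, d\theta
\]
for every continuous $h$. The definition of weak illumination by circles says exactly that there is a density-$1$ set $G_p \subset \mathbb{R}$ such that $\mu_t \to \lambda_\omega$ in the weak-$\ast$ topology as $t \to \infty$ along $G_p$. In particular, $G_p$ is unbounded.

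Now let $\epsilon > 0$ and choose finitely many open balls $B_1,\dots,B_N$ of radius $\epsilon$ (in the flat metric) covering $X$. Each $B_i$ has $\lambda_\omega(B_i) > 0$. By the Portmanteau theorem applied along $G_p$,
\[
\liminf_{\substack{t \to \infty \\ t \in G_p}} \mu_t(B_i) \;\geq\; \lambda_\omega(B_i) \;>\; 0
\]
for every $i$. Hence there exists $T \in G_p$ such that $\mu_T(B_i) > 0$ for all $i = 1,\dots,N$ simultaneously. But $\mu_T(B_i) > 0$ means that the set $\{\theta \in [0,1) : F^T_{2\pi\theta}(p) \in B_i\}$ has positive Lebesgue measure, hence is nonempty. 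Therefore $\bigcup_{\theta \in [0,2\pi)} F^T_\theta(p)$ meets every $B_i$, and since the $B_i$ cover $X$, this set is $\epsilon$-dense in $X$.

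I do not anticipate a substantive obstacle here: the only thing one needs from Theorem~\ref{thm:weakly seen} beyond weak-$\ast$ convergence of integrals against continuous test functions is the existence of a single time $T$ along $G_p$ at which $N$ simultaneous strict positivity conditions hold, which is immediate from the finite intersection of $N$ cofinal subsets of $G_p$. The slight subtlety (and the only place one must be a little careful) is that the test functions in the definition of weak illumination are continuous, so Portmanteau is applied via the standard approximation of $\mathbf{1}_{B_i}$ from below by continuous functions supported inside $B_i$ with integral close to $\lambda_\omega(B_i)$.
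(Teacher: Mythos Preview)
Your argument is correct and is precisely the natural way to extract the corollary from Theorem~\ref{thm:weakly seen}. The paper does not give a separate proof: it simply states that the density question ``is resolved by the previous theorem'' and records the corollary without further argument, so you are supplying the details the authors leave implicit. One cosmetic point: with balls of radius $\epsilon$ in the cover, hitting each $B_i$ only guarantees that every point of $X$ lies within $2\epsilon$ of the circle; take balls of radius $\epsilon/2$ instead (or just note that $\epsilon$ is arbitrary) to get literal $\epsilon$-density.
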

This answers a question in \cite{Mon}.

We derive  Theorem \ref{thm:weakly seen} from
\begin{thm}\label{thm:typ disjoint} For almost every surface \footnote{ Almost is respect to any $SL_2(\mathbb{R})$ invariant probability measure in a moduli space of compact translation surfaces.} for any $k \in \mathbb{N}$ we have 
$$\lambda^k(\{(\theta_1,...,\theta_k):F_{\theta_1}\times ...\times F_{\theta_k} \text{ is uniquely ergodic} \})=1$$
where $\lambda$ is the (normalized) Lebesgue measure on the circle.
 Moreover, for every $\omega$ then for almost every $A\in SL_2(\mathbb{R})$ we have that 
 $$\lambda^k(\{(\theta_1,...,\theta_k):F_{\theta_1, A\omega}\times ...\times F_{\theta_k, A\omega} \text{ is uniquely ergodic} \})=1.$$
\end{thm}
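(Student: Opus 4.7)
The plan is to reduce Theorem \ref{thm:typ disjoint} to a joint disjointness statement via Furstenberg's correspondence, and then prove joint disjointness for typical $k$-tuples of directions by combining Teichm\"uller equidistribution of random rotations with a spectral/joinings analysis.

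\emph{Step 1 (Reduction to joint disjointness).} By Furstenberg's theorem, for uniquely ergodic systems the $k$-fold product is uniquely ergodic if and only if the factors are jointly disjoint. The Kerckhoff--Masur--Smillie theorem guarantees that for \emph{every} translation surface, Lebesgue-almost every direction yields a uniquely ergodic linear flow, so by Fubini $\lambda^k$-almost every $k$-tuple is componentwise uniquely ergodic. It therefore suffices to show that, for almost every $A \in SL_2(\mathbb{R})$ and $\lambda^k$-a.e. $(\theta_1, \dots, \theta_k)$, the flows $F_{\theta_1, A\omega}, \dots, F_{\theta_k, A\omega}$ are jointly disjoint. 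The first (unconditional) statement in the theorem then follows from the $SL_2(\mathbb{R})$-quasi-invariance argument applied to a fixed invariant measure on the stratum.

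\emph{Step 2 (Generic directions from Teichm\"uller equidistribution).} Fix an $SL_2(\mathbb{R})$-invariant probability measure on the stratum. Invoke a Chaika--Eskin-type equidistribution statement for random rotations: for any $\omega$, Lebesgue-a.e. $\theta$ has the property that the forward Teichm\"uller geodesic $g_t r_\theta \omega$ equidistributes with respect to this measure on its $SL_2(\mathbb{R})$-orbit closure. This produces a full-measure set $\Theta_\omega \subset S^1$ of ``good'' directions, and to each $\theta \in \Theta_\omega$ one attaches a canonical \emph{spectral distribution} for $F_{\theta,\omega}$, read off from the asymptotic geometry along $g_t r_\theta \omega$.

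\emph{Step 3 (Disjointness via the spectral distribution).} The core claim is that, for $\lambda^k$-a.e. $(\theta_1, \ldots, \theta_k) \in \Theta_\omega^k$, any joining $\rho$ of $F_{\theta_1,\omega}, \ldots, F_{\theta_k,\omega}$ equals the product of the invariant measures. The idea is to renormalize $\rho$ by the diagonal Teichm\"uller flow $g_t$ acting on $(r_{\theta_1}\omega, \ldots, r_{\theta_k}\omega)$, which rescales each factor flow in time. Equidistribution of the $k$-fold product geodesic inside the $k$-fold product of strata, together with the ergodic consequences of the spectral distributions of the individual factors, forces the marginals of the renormalized joinings to decouple, and hence $\rho$ to split as a product. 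A Fubini argument discards the measure-zero exceptional set of tuples.

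\emph{Main obstacle.} The delicate part is Step 3: disjointness is not a spectral invariant in general, so the ``spectral distribution'' must simultaneously be generic for a.e. direction, stable under joinings, and strong enough to force triviality of joinings. Designing it so that all three hold, and then proving that a typical $k$-tuple of good directions has mutually incompatible spectral distributions, is the technical heart. An additional subtlety appears for $k \ge 3$: pairwise disjointness does not imply joint disjointness, so the $k=2$ case cannot simply be iterated; instead, one has to analyze $k$-fold joinings directly, using equidistribution of the product Teichm\"uller geodesic in the product of strata.
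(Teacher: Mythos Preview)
Your outline is not a proof: Step~3 is precisely where all the content lies, and you have not supplied any mechanism that would force a joining to be trivial. ``Renormalize by the diagonal Teichm\"uller flow and use equidistribution in the product of strata'' does not by itself say anything about joinings of the linear flows; a joining $\rho$ lives on $X^k$, not on the stratum, and Teichm\"uller equidistribution of $g_t r_{\theta_i}\omega$ gives you information about the \emph{surface}, not about invariant measures for the \emph{vertical flow} on that surface. Your ``spectral distribution'' is undefined, and you correctly identify that disjointness is not spectral, so it is unclear what object you intend to extract from equidistribution that would decide the question. As written, the proposal is a wish list rather than an argument.

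The paper's route is quite different and more concrete. It does \emph{not} use Chaika--Eskin or any equidistribution in the stratum. Instead it proceeds by induction on $k$ (so your worry about $k\ge 3$ is handled by showing the $k$-fold product, assumed uniquely ergodic, is disjoint from the $(k{+}1)$-st factor, and then invoking that uniquely ergodic plus disjoint implies the product is uniquely ergodic). The engine is an explicit disjointness criterion (Proposition~\ref{prop:disjoint criterion}): if $\{t_i\}$ is a $c$-partial rigidity sequence for $F_1$ and along $\{t_i\}$ the other flow $F_2$ has its continuous spectrum decaying weakly to $0$ and all its nontrivial eigenvalues bounded away from $1$, then $F_1$ and $F_2$ are disjoint. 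Partial rigidity sequences are produced from cylinders (Theorem~\ref{thm:part rig flow}). To match a rigidity sequence of one directional flow against the spectral condition for the product of the others, the paper writes $r_\theta \hat h_s$ in $\bar N A N$ form; this turns $F_{r_{\Phi_s(c_i)}\hat h_s\omega}^t$ into a time-rescaling $F_{h_{c_i}\omega}^{\zeta_i(s)t}$ with explicit $\zeta_i(s)=((1-c_is)^2+c_i^2)^{-1/2}$. The linear independence of the functions $(\zeta_i/\zeta_{k+1})'$ (Lemma~\ref{lem:lin indep}) is then enough to show that for almost every $s$ the rigidity times of the $(k{+}1)$-st flow satisfy condition~(*) for the product of the first $k$. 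A Fubini argument over $s$ and the absolute continuity of $\Phi_s$ finish the proof.
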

\begin{cor} For almost every surface the flow in almost every direction is not isomorphic to the vertical flow.
\end{cor}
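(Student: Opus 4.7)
The plan is to deduce this from Theorem~\ref{thm:typ disjoint} in the case $k=2$, using the observation that unique ergodicity of the product flow $F_{\theta_1}\times F_{\theta_2}$ forces $F_{\theta_1}$ and $F_{\theta_2}$ to be non-isomorphic. Indeed, any measure-preserving isomorphism $\phi\colon X\to X$ between the two would yield the graph joining $(\mathrm{id},\phi)_*\lambda_\omega$, an $F_{\theta_1}\times F_{\theta_2}$-invariant Borel probability measure supported on $\{(x,\phi(x)):x\in X\}$, which is $(\lambda_\omega\times\lambda_\omega)$-null because $\lambda_\omega$ is non-atomic; this contradicts unique ergodicity. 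So for almost every surface $\omega$, the set of pairs $(\theta_1,\theta_2)$ with $F_{\theta_1}$ isomorphic to $F_{\theta_2}$ is contained in the Borel $\lambda^2$-null set of pairs whose product flow fails unique ergodicity.

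The second step upgrades this pairwise statement to the statement of the corollary, which concerns the specific vertical direction. Fubini applied to the Borel null set above gives, for $\mu$-a.e.\ $\omega$ and Lebesgue-a.e.\ $\alpha\in[0,1)$, that $\{\theta:F_\theta\text{ is isomorphic to }F_\alpha\}$ is $\lambda$-null. Since the direction $\alpha$ on $\omega$ is precisely the vertical on the rotated surface $R_\alpha\omega$, this says: for $\mu$-a.e.\ $\omega$, Lebesgue-a.e.\ rotate $R_\alpha\omega$ satisfies the conclusion of the corollary. Swapping the two quantifiers by Fubini, for Lebesgue-a.e.\ $\alpha$ one has $\mu\{\omega:R_\alpha\omega\text{ satisfies the corollary}\}=1$, and rotation invariance of $\mu$ (a consequence of $SL_2(\R)$-invariance) then equates this to $\mu\{\omega:\omega\text{ satisfies the corollary}\}$, which is therefore $1$.

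The only real content above is the graph-joining computation in the first paragraph; everything else is measure-theoretic bookkeeping. The main technical nuisance I expect is that ``measurably isomorphic'' is in general analytic rather than Borel as a condition on $(\theta_1,\theta_2)$, which is why I would run the Fubini argument throughout on the Borel superset ``the product flow is not uniquely ergodic'' rather than on the set of isomorphic pairs itself.
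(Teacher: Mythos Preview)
Your argument is correct and is precisely the natural derivation the paper has in mind: the paper states the corollary without proof as an immediate consequence of Theorem~\ref{thm:typ disjoint}, and your graph-joining observation together with the Fubini/rotation-invariance maneuver is the standard way to pass from the $\lambda^2$-a.e.\ pairwise statement to a statement about the distinguished vertical direction. Your care in running Fubini on the Borel set where the product fails unique ergodicity (cf.\ Lemma~\ref{lem:ue borel}) rather than on the isomorphism locus is well placed.
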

 Before this result it was not known whether for every surface, other than torus covers, there was a single isomorphism class (depending on the surface) so that the flow in almost every direction was in this isomorphism class. 
This is a strengthening of a result by Gadre and the first named author \cite{Disjoint flow} (which ruled out that there was one isomorphism class for almost every translation surface).

\subsection{Organization of the paper} 
The condition that a surface is weakly illuminated by circles is approachable from general ergodic theory. In section \ref{sect:reduction}, we prove that Theorem \ref{thm:typ disjoint} (for $k=2$) implies Theorem \ref{thm:weakly seen}. In section  \ref{sect:disjointness}, we provide an abstract disjointness criterion which is a refinement of the main result in \cite{Disjoint} . We apply this criterion to translation flows in section \ref{sect:application} using  a matrix decomposition.  Given two directional flows $F_{\theta_1}$ and $F_{\theta_2}$, the $SL_2(\mathbb{R})$ deformation allows  us  to match two sets of real numbers together: these two sets are defined in section \ref{sect:disjointness}  (Definitions \ref{def:part rig} and \ref{def:spread}).  One is defined in terms of $F_{\theta_1}$ and the other one in terms of  $F_{\theta_2}$. 
\\

\textbf{Acknowledgments:} We thank Sebastien Gou\"ezel for a helpful conversation (he found an important simplication of section \ref{sect:reduction}:  the proof of Proposition \ref{prop:seb}). We thank the anonymous referee for many corrections and helpful suggestions that improved the paper. In particular, the current proof of Proposition 2 is due to the referee. We thank Oberwolfach, where the project began and CIRM where it was completed. J. Chaika was supported in part by NSF grants DMS-1300550 and DMS-1452762, the Sloan foundation and a Warnock chair. J. Chaika thanks Giovanni Forni for bringing this question to his attention. P. Hubert is partially supported by Projet ANR blanc GeoDyM.

\section{Background}

We will freely use the language of translation surfaces and ergodic theory. 
Concerning the background on translation surfaces, see for instance the following surveys \cite{Forni-Matheus}, \cite{MT}, \cite{viana survey},  \cite{Zo}. To learn more about ergodic theory, especially about joinings, see \cite{Ru} and \cite{glasner}. 

%Maybe move Section 3.2 (spectral preliminaries) or at least the second lemma of it to this section.

\subsection{Translation surfaces}

A translation surface $X$ is a compact surface of genus $g$ endowed with a flat metric with trivial rotational holonomy
and conical singularities whose angles are multiples of $2\pi$. Alternatively, a translation surface $X$ is a
datum $(S,\omega)$, where $S$ is a compact Riemann surface of genus $g$ and $\omega$ is an holomorphic 1-form on $S$
with zeros of orders $k_1, \dots ,k_r$ at points $p_1, \dots ,p_r$.
The linear flow $F_\theta$ is well defined for every direction $\theta$. Kerckhoff, Masur, Smillie showed that $F_\theta$ is uniquely ergodic for almost every $\theta$ (\cite{KMS}).
A maximal subset of $X$ filled by parallel closed geodesics is called an (open) cylinder. 
\\

 For a translation surface $X$, the genus and the orders of zeroes satisfy the relation $k_1 + \cdots +k_r = 2g-2$.  For fixed integers $k_1, \dots ,k_r$ satisfying the last relation, let $\calH(k_1, \dots, k_r)$ denote   the corresponding stratum of the moduli space of translation surfaces, that is the set of translation surfaces whose associated 1-form $\omega$ has $r$ zeroes with orders $k_1, \dots ,k_r$. It is a complex orbifold with complex dimension 
 $2g + r -1$. Consider a translation surface $X = (S, \omega)$ in the stratum $\calH(k_1, \dots, k_r)$ and $A  \in SL(2,\R)$. A new translation surface $A · X$  is obtained by the linear action of $A$ in the translation charts. Therefore the group $SL(2, \R)$ acts on $\calH(k_1, \dots, k_r)$. The group $SL(2,\R)$ preserves the hypersurface $\calH^{1}(k_1, \dots, k_r)$ consisting of all $X \in \calH(k_1, \dots, k_r)$ with $Area(X) = 1$. The Teichm\"uller flow $g_t$ is the action of the diagonal subgroup, that is
$g_t = \begin{pmatrix}
 e^t & 0 \\
 0 & e^{-t}
 \end{pmatrix}$,
 the unstable and stable horocycle flows are 
 $h_s = \begin{pmatrix}
 1 & s \\
 0 & 1
 \end{pmatrix}$, $\hat{h}_s=\begin{pmatrix} 1&0\\s&1\end{pmatrix}$, 
 the circular flow is 
 $r_\theta = \begin{pmatrix}
 \cos(\theta) & \sin(\theta)\\
 -\sin(\theta) & \cos(\theta)
 \end{pmatrix}$.
 Our results hold for almost every surface with respect to any $SL(2, \mathbb{R})$ invariant measure on the space of translation surfaces and does not require the recent classification of these measures due to Eskin-Mirzakhani \cite{Eskin-Mirzakhani}.

\subsection{Spectral theory}

Let $\calH$ be a Hilbert space and $T$ be a bounded linear operator acting on $\calH$.
A complex number $\lambda$ belongs to the spectrum of $T$ if $T- \lambda Id$ is not invertible. 
%The point spectrum is the closure of the span of the set of eigenvalues of $T$. 
%It complement in the spectrum is the continuous spectrum. 
 We denote by
$\calH_{pp}$  the closure of the eigenfunctions and by $\calH_c$ its orthogonal complement in $\calH$. 
$\calH_c$ is the subset of $\calH$ with continuous spectrum.

The link with ergodic theory is the following:
Let $X$ be a polish space, $\phi_t$ a flow preserving a measure $\mu$ on $X$ and $\calH = L^2(X, \mu)$, the family of unitary operators $U_t$ is defined by
$U_t (f) = f \circ \phi_t$ for $t \in \R$.

By Stone's theorem, there exists a self-adjoint operator $T$ such that $U_t = e^{it T}$. We call $T$ the infinitesimal generator of the family of operators $U_t$. A complex number $z$ belongs to the spectrum of $U_t$ if and only if $z = e^{ita}$ where $a$ is a (real) number in the spectrum of $T$.  Moreover if $f$ is an eigenfunction of $T$ with eigenvalue $a$ then $f$ is an eigenfunction of $U_t$ with eigenvalue $e^{ita}$.\footnote{To simplify notations, we will write $exp(s)$ as  $e^{ i s}$.} Consequently, the spaces $\calH_{pp}$ and $\calH_c$ do not depend on $t$. 

We recall that ergodicity, the mixing property and the weak mixing property are spectral properties. For instance, the flow $\phi_t$ is weak mixing if and only if the operator $U_t$ has no non trivial eigenfunctions (the space $\calH_{pp}$ contains only constant functions). Moreover, there is a subset $A$ of density 1 in $\R$ so that along $A$, $U_t$ restricted to $\calH_c$ converges to 0 in the weak operator topology. That is, for each $f$ and $g$ in $\calH_c$
$$\underset{t \to \infty, t \in A}{\lim}<U_tf,g> =0.$$

In the sequel, we will need the following 
\begin{lem}\label{lem:product spectrum}Let $F_1$ and $F_2$ be two flows. Let $\mathcal{H}_{c_1}$, $\mathcal{H}_{c_2}$ be the subsets of $L^2$ with continuous spectrum for each. Let $A_1,A_2$ be sets of $\R$ where $U_{F_i}|_{H_{c_i}}$ converge to 0 in the weak operator topology. Let $\{exp(s\alpha_i)\}_{i=1}^{\infty}$ and $\{exp(s\beta_i)\}_{i=1}^{\infty}$ be the eigenvalues of $F_1^s$ and $F_2^s$ respectively. Then $A_1 \cap A_2$ is a set so that $F_1\times F_2 $ converges to 0 in the weak operator topology on the subset of $L^2$ with continuous spectrum. The eigenvalues of $(F_1\times F_2)^s$ have the form $exp(s(\alpha_i+\beta_j))$. 
\end{lem}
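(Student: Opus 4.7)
Work in the tensor product realization $L^2(\mu_1 \times \mu_2) \cong L^2(\mu_1) \otimes L^2(\mu_2)$, under which $U_t^{F_1 \times F_2} = U_t^{F_1} \otimes U_t^{F_2}$. Distributing $\mathcal{H}_i = \mathcal{H}_{pp_i} \oplus \mathcal{H}_{c_i}$ across the tensor product gives the orthogonal decomposition
\[
L^2(\mu_1 \times \mu_2) = (\mathcal{H}_{pp_1} \otimes \mathcal{H}_{pp_2}) \oplus (\mathcal{H}_{pp_1} \otimes \mathcal{H}_{c_2}) \oplus (\mathcal{H}_{c_1} \otimes \mathcal{H}_{pp_2}) \oplus (\mathcal{H}_{c_1} \otimes \mathcal{H}_{c_2}).
\]
The first step is to identify the first summand with the pure point part of the product flow, and the direct sum of the remaining three with its continuous spectrum part. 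The eigenvalue claim in the lemma follows directly: if $U_s^{F_1}\phi_i = e^{s\alpha_i}\phi_i$ and $U_s^{F_2}\psi_j = e^{s\beta_j}\psi_j$, then $\phi_i \otimes \psi_j$ is an eigenfunction of $U_s^{F_1 \times F_2}$ with eigenvalue $e^{s(\alpha_i + \beta_j)}$, and the closed linear span of these products is $\mathcal{H}_{pp_1}\otimes\mathcal{H}_{pp_2}$.

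For weak convergence to $0$ on the continuous spectrum summand, test on simple tensors $f_1 \otimes f_2$ and $g_1 \otimes g_2$ in which at least one of $f_1,f_2$ and at least one of $g_1,g_2$ lies in the continuous spectrum of its factor. Since
\[
\langle U_t^{F_1 \times F_2}(f_1 \otimes f_2), g_1 \otimes g_2 \rangle = \langle U_t^{F_1} f_1, g_1 \rangle \, \langle U_t^{F_2} f_2, g_2 \rangle,
\]
on each of the three cross summands at least one factor on the right tends to $0$ along the corresponding $A_i$ while the other is uniformly bounded in $t$ by Cauchy-Schwarz. Both $A_i$ have density $1$, so $A_1 \cap A_2$ is itself an unbounded set (in fact of density $1$); since $A_1 \cap A_2 \subseteq A_i$, the product tends to $0$ along $A_1 \cap A_2$.

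The only mildly delicate step is passing from simple tensors to arbitrary vectors in the continuous spectrum part of the product. I would handle this with a standard $\varepsilon/3$ argument: approximate $f$ and $g$ by finite linear combinations of simple tensors of the above form, use $\|U_t^{F_1\times F_2}\|=1$ to control the approximation errors uniformly in $t$, and apply the previous paragraph to each of the finitely many remaining terms. I do not expect a real obstacle; the content of the lemma is essentially the bookkeeping observation that the spectral decomposition of a product flow is the tensor product of the spectral decompositions of its factors, and the eigenvalues add.
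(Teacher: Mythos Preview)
The paper does not actually prove this lemma; immediately after the statement it says ``We leave this as an exercise to the reader.'' Your proposal is the standard, correct argument: realize $L^2(\mu_1\times\mu_2)$ as the Hilbert tensor product, distribute the decompositions $\mathcal{H}_i=\mathcal{H}_{pp_i}\oplus\mathcal{H}_{c_i}$, identify $\mathcal{H}_{pp_1}\otimes\mathcal{H}_{pp_2}$ with the pure point part of the product (spanned by $\phi_i\otimes\psi_j$, giving eigenvalues $\exp(s(\alpha_i+\beta_j))$), and check weak convergence to $0$ on each of the three remaining summands via the factorization $\langle U_t(f_1\otimes f_2),g_1\otimes g_2\rangle=\langle U_t^{F_1}f_1,g_1\rangle\langle U_t^{F_2}f_2,g_2\rangle$. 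The $\varepsilon/3$ extension to general vectors using $\|U_t\|=1$ is exactly right.

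Two minor remarks. First, your aside that ``both $A_i$ have density $1$'' is not part of the hypotheses of the lemma as stated (though it holds in the paper's intended application); fortunately your argument does not actually use it, since convergence along $A_1\cap A_2$ follows formally from convergence along each $A_i$ regardless of their size. Second, to cleanly justify that the three cross summands lie in the continuous spectrum of the product (rather than merely being orthogonal to the known eigenfunctions), one either invokes the fact that the spectral measure of $f_1\otimes f_2$ is the convolution $\sigma_{f_1}*\sigma_{f_2}$, which is nonatomic whenever one factor is, or one observes that your weak-convergence-to-$0$ computation already rules out any eigenfunction living in those summands. Either route closes the argument.
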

We leave this as an exercise to the reader. 

% behavior on H_c

\subsection{Joinings}

We  recall some standard material (see \cite[page 132]{glasner}).
Let $F_1^s: X \to X$ and $F_2^s: Y \to Y$ be two flows. Assume that $F_1$ preserves the probability measure $\mu$ and $F_2$ preserves the probability measure $\nu$. A joining $\lambda$ is an invariant measure by $F_1\times F_2$ on the space $X \times Y$ with marginals $\mu$ and $\nu$. More precisely, for every measurable sets $A$ in $X$ and $B$ in $Y$.
$$\lambda(B\times X) = \mu(B) \textrm{ and } \lambda(X\times A) = \nu(A).$$ We say $(F_1,\mu)$ and $(F_2,\nu)$ are \emph{disjoint} if $\mu \times \nu$ is their only joining.

 A continuous linear map $P:L^2((X,\mu)) \to L^2((Y,\nu))$ is called a \emph{Markov operator} if \begin{enumerate}
\item $P\geq 0$ and $P^*\geq 0$,
\item $P\chi_X=\chi_Y$ and $P^*\chi_Y=\chi_X$
\item $PU_{F_1}^s=U_{F_2}^sP$. 
\end{enumerate}
Let $\lambda$ be an $F_1 \times F_2$ preserved measure of $X \times Y$ with marginals $\mu$ and $\nu$. This defines a  Markov operator $\Phi:L^2(\nu) \to L^2(\mu)$  by  $\int_B\Phi(\chi_A)d\mu=\lambda(B \times A)$. Formally, this Markov operator is the conditional expectation associated to the disintegration of $\lambda$ over $\nu$. The set of Markov operators are in 1-1 correspondence with joinings. This identification respects the convex structure of preserved measures and so  the extreme points come from ergodic joinings. %$\Phi$ intertwines the dynamics: 
%$\Phi_1 \circ U_{F_1}^s=U_{F_2}^s \circ \Phi_1$ and similarly for $\Phi_2$.

%Let $\omega$ denote a flat surface (with distinguished vertical direction). Let $F_\omega$ denote the vertical flow on $\omega$ and $\lambda_\omega$ denote (2-dimensional) area on $\omega$ normalized to have area 1. Let $\lambda$ denote Lebesgue measure on $S^1$.

\section{Theorem \ref{thm:typ disjoint} implies Theorem \ref{thm:weakly seen}}\label{sect:reduction} 
%\ann{Notation: In later sections we use $\lambda_\omega$ for the measure on the surface. In this section we use $\lambda^2$. Which to use? Are we using $\lambda$ too much and maybe should use $\mu_\omega$? Or should we just use $\lambda^2$ because it is a nice 2-manifold.}\ann{Should we name the set: $\{\bar{\theta}: F_{\bar{\theta}}^s(\bar{p})\in (\{\bar{v} \in (X,\omega)^M:|\frac 1 M\sum_{i=1}^M\chi_R(v_i)-\lambda^2(R)|>\epsilon\}\}$ or the set $\{\bar{v} \in (X,\omega)^M:|\frac 1 M\sum_{i=1}^M\chi_R(v_i)-\lambda^2(R)|>\epsilon\}$?}

This section describes the reduction of Theorem \ref{thm:weakly seen} to Theorem \ref{thm:typ disjoint} for $k=2$.

\begin{prop}\label{prop:seb} Let $\omega$ be a  translation surface. If $F_{r_\psi\omega}\times F_{r_\phi \omega}$ is uniquely ergodic for almost every $(\psi, \phi)$ then for any $f \in C(\omega)$ 
there exists a set of density 1, $A$, so that 
$$\underset{t \in A}{\lim}\int_0^{2\pi}f(F_{r_\theta\omega}^tp)d\theta=\int f d\lambda_\omega.$$
\end{prop}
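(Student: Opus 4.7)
The plan is a standard $L^2$ mean-square argument that converts the unique ergodicity hypothesis into Cesaro decay of $|I(t)|^2$ and then extracts a density-$1$ set. Let $\bar f := \int f\, d\lambda_\omega$ and replace $f$ by $f - \bar f$, so without loss of generality $\bar f = 0$; the task becomes to show that
\[ I(t) \;:=\; \int_0^{2\pi} f(F_{r_\theta \omega}^t p)\, d\theta \]
tends to zero along a density-$1$ set of $t$'s.

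The central computation is to check that $\tfrac{1}{T}\int_0^T I(t)^2\, dt \to 0$. Expanding the square as a double integral over $(\psi,\phi) \in [0,2\pi]^2$ and applying Fubini,
\[
\frac{1}{T}\int_0^T I(t)^2\, dt \;=\; \int_0^{2\pi}\!\int_0^{2\pi}\frac{1}{T}\int_0^T f(F_{r_\psi \omega}^t p)\, f(F_{r_\phi \omega}^t p)\, dt\, d\psi\, d\phi.
\]
For almost every $(\psi,\phi)$ the product flow $F_{r_\psi \omega}\times F_{r_\phi \omega}$ on $\omega \times \omega$ is uniquely ergodic by hypothesis, so the inner Birkhoff average, being the ergodic average of the continuous observable $f\otimes f$ along the orbit of the single point $(p,p)$, converges to $\bar f^{\,2} = 0$. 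Since $|f(F_{r_\psi \omega}^t p)\, f(F_{r_\phi \omega}^t p)| \le \|f\|_\infty^2$ uniformly, dominated convergence pushes the limit through the double integral and yields $\tfrac{1}{T}\int_0^T I(t)^2\, dt \to 0$.

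From the mean-square decay I would deduce the existence of $A$ by a standard extraction: for each $n$, Chebyshev applied to $I^2$ shows that $B_n := \{t > 0 : |I(t)| > 1/n\}$ has density zero, and a diagonal argument produces a single density-$1$ set $A$ along which $I(t) \to 0$. Undoing the initial centering recovers the statement for the original $f$.

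The only real subtlety I see is the bookkeeping identifying the product flow $F_{r_\psi \omega} \times F_{r_\phi \omega}$ on two rotated copies of $\omega$ with a pair of directional flows on $\omega$ itself, so that the hypothesis applies directly to the inner Birkhoff average above; after that the argument is essentially the elementary fact that $L^2$-Cesaro convergence implies convergence along a density-$1$ set, and I do not expect any serious obstacle.
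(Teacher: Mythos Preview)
Your proposal is correct and is essentially the same argument as the paper's: center $f$, expand $|I(t)|^2$ as a double integral, use unique ergodicity of the product flow for almost every $(\psi,\phi)$ together with dominated convergence to get Cesaro decay of $|I(t)|^2$, and then extract a density-$1$ set. The paper compresses the extraction step into ``it suffices to show'' while you spell out Chebyshev plus a diagonal argument, and the identification of $F_{r_\theta\omega}$ with the directional flow $F_\theta$ on $\omega$ is purely notational, so there is no genuine subtlety there.
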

\begin{proof}It suffices to show that $$\underset{T \to \infty}{\lim}\, \frac 1 T \int_0^T(|\int_0^{2\pi}f(F_{r_\theta\omega}^tp)d\theta|)^2 dt=0$$ for any $f \in C(\omega)$ with integral 0.
By Fubini's Theorem and the fact that $f$ is bounded (so we may interchange the limit and integral) this is
$$\int_0^{2\pi}\int_0^{2\pi}\underset{T \to \infty}{\lim} \, \frac 1 T \int_0^Tf(F_{r_\theta\omega}^tp)\bar{f}(F_{r_\phi\omega}^tp )dtd\theta d\phi.$$
 By our assumption that $F_{r_\theta\omega}\times F_{r_\phi \omega}$ is uniquely ergodic, this is 0.\footnote{$G(p,q)=f(p)\bar{f}(q)$ is continuous and has integral 0.}
\end{proof}
\begin{proof}[Proof of Theorem \ref{thm:weakly seen} assuming Theorem \ref{thm:typ disjoint}] This is a standard argument.
By Theorem 2, we have the assumption of Proposition \ref{prop:seb}. So for each $p$, $f\in C(X)$ we have that $$\underset{T \to \infty}{\lim} \frac 1 T \int _0^T|\frac 1 {2\pi}\int _0^{2\pi}f(F^t_{r_\theta \omega}p)d\theta-\int f d\lambda|=0.$$ Choose a countable subset $f_1,...$ of $C(X)$ that is dense in supremum norm. It suffices to show that there exists a set $\hat{A}$ of density 1 so that 
$$\underset{t \in \hat{A}}{\lim} \frac 1 {2\pi}\int_0^{2\pi} f_j(F^t_{r_\theta}p)d\theta=\int f_jd\lambda_\omega$$ for all $j$.
 Let $$a_n(t)=| \frac 1 {2\pi}\int_{0}^{2\pi}f_n(F^t_{r_{\theta}\omega}p) d\theta- \int f_n d\lambda_\omega\vert.$$
Clearly, $\frac 1 T \int_0^T \sum_{j=1}^{\infty} \frac{a_j(t)}{2^j\|f_j\|_{\sup}} dt \to  0.$ So there exists a sequence of density 1, $\hat{A}$, so that for all $n$ we have $a_n(t) \to 0$ as $t \to \infty$ in $\hat{A}$.  
 \end{proof}
\section{Disjointness criterion} \label{sect:disjointness}
The main result of this section is our disjointness criterion, Proposition \ref{prop:disjoint criterion}, which requires some preliminaries.
\begin{defin}\label{def:part rig} Let $(X,F^s,\mu)$ be an ergodic measure preserving flow on a metric space $(X,d)$. We say \{$t_1\}_{i=1}^\infty$ is a $c$-partial rigidity sequence for $F^s$ if there exists sets $S_1, \dots$ so that
\begin{enumerate} 
\item $\mu(S_i)\geq c$
\item $\underset{i \to \infty}{\lim}\int_{S_i}  d(F^{t_i}x,x)d\mu(x)=0$
\item for each $s$ we have $\underset{i \to \infty}{\lim} \mu(F^s S_i \Delta S_i)=0.$
\end{enumerate}
\end{defin}
\noindent
 Though we include condition (3) in the statement, it follows from the other 2 conditions.
\begin{lem}A system satisfying (1) and (2) satisfies (3).
\end{lem}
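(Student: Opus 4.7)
The plan is to replace the given sets $S_i$ by the intrinsic rigidity sets $\tilde S_i := \{x \in X : d(F^{t_i}x, x) \leq \epsilon_i\}$, where $\epsilon_i \to 0$ is chosen small enough that $\tilde S_i$ still captures most of $S_i$. The starting observation is that in the applications to translation flows, the flow $F^s$ acts by isometries of the underlying metric $d$ (away from a measure-zero singular set), and $F^s$ commutes with $F^{t_i}$. These two facts will force $\tilde S_i$ to be exactly $F^s$-invariant, making (3) trivial, so the work is really just to keep (1) and (2) alive after the replacement.

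Concretely, let $\eta_i := \int_{S_i} d(F^{t_i} x, x)\, d\mu \to 0$ by (2), and set $\epsilon_i := \sqrt{\eta_i}$. Markov's inequality gives
$$\mu\{x \in S_i : d(F^{t_i}x, x) > \epsilon_i\} \leq \eta_i/\epsilon_i = \sqrt{\eta_i} \to 0,$$
so $\tilde S_i$ contains a subset of $S_i$ of measure at least $c - \sqrt{\eta_i}$, and hence $\mu(\tilde S_i) \geq c/2$ eventually, which is (1) with constant $c/2$. Condition (2) is built into the definition, since $\int_{\tilde S_i} d(F^{t_i}x, x)\, d\mu \leq \epsilon_i \to 0$. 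For (3), using isometry of $F^s$ together with commutativity,
$$d(F^{t_i}(F^s y), F^s y) = d(F^s F^{t_i} y, F^s y) = d(F^{t_i} y, y)$$
for almost every $y$, so $F^s \tilde S_i = \tilde S_i$ set-theoretically (up to a null set), and $\mu(F^s \tilde S_i \Delta \tilde S_i) = 0$.

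The main subtlety is that the statement is framed for an abstract metric flow, whereas the argument above exploits that $F^s$ is a metric isometry. This is automatic for translation flows (which is the setting where the lemma will be used), but is not a formal consequence of being a measure-preserving flow on a metric space. To prove the lemma in full abstract generality one would instead replace $\chi_{S_i}$ by its orbital average $f_i(x) = \frac{1}{T_i} \int_0^{T_i} \chi_{S_i}(F^u x)\, du$ for a slowly growing $T_i$, and threshold at a level chosen by a layer-cake/Fubini argument over a countable dense set of values of $s$; condition (3) then follows from the bound $\|f_i - f_i \circ F^s\|_1 \leq 2|s|/T_i$. Verifying (2) for the thresholded set is the delicate step in that approach, and without isometry it seems to require some additional Lipschitz-type control on the flow. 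I expect this interplay between the metric structure and the measure-theoretic invariance to be the main obstacle.
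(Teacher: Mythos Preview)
Your isometric argument is correct and, for translation flows, gives a cleaner route than the paper's: once $F^s$ is a metric isometry commuting with $F^{t_i}$, the intrinsic rigidity sets $\tilde S_i=\{x:d(F^{t_i}x,x)\le\epsilon_i\}$ are exactly $F^s$-invariant, and Markov's inequality with $\epsilon_i=\sqrt{\eta_i}$ preserves (1) and (2). Since the lemma is only invoked for translation flows, this already does the job for the paper.

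For the abstract lemma as stated, however, you are missing the idea that makes it go through without isometry. The paper does \emph{not} need extra Lipschitz control on the flow; instead it manufactures uniform continuity from measurability via Lusin's theorem. Fix $s$ and $\delta>0$, take a compact $K$ with $\mu(K)>1-\delta$ on which $F^s$ is uniformly continuous, and let $f$ be a modulus of continuity for $F^s$ on $K$. If $p\in K\cap F^{-t_i}K$ and $d(F^{t_i}p,p)<\epsilon$, then $d(F^{t_i}F^sp,F^sp)=d(F^sF^{t_i}p,F^sp)<f(\epsilon)$; so $F^s$ maps the $\epsilon$-rigidity set (intersected with $K\cap F^{-t_i}K$) into the $f(\epsilon)$-rigidity set. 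The remaining point is to choose $\epsilon_i\to 0$ so that the ``annulus'' $\{p:\epsilon_i<d(F^{t_i}p,p)<f(\epsilon_i)\}$ has measure $<\delta$: this follows by pigeonhole, since among the $\lceil\delta^{-1}\rceil+2$ scales $\epsilon,f(\epsilon),f^2(\epsilon),\dots$ at most $\delta^{-1}$ of the successive annuli can have measure exceeding $\delta$. With $S_i=\{p\in K\cap F^{-t_i}K:d(F^{t_i}p,p)<f(\epsilon_i)\}$ one then gets $\mu(S_i\setminus F^sS_i)<O(\delta)$, which is (3). Your orbital-averaging sketch is not needed; the Lusin trick is the substitute for isometry that you were looking for.
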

\begin{proof} Let $\delta>0$ and $s$ be given. It suffices to show there exists $S_i$ so that 
\begin{itemize}
\item $\mu(S_i)>c-9\delta$ for all but finitely many $i$, 
\item $\underset{i \to \infty}{\lim}\int_{S_i}  d(F^{t_i}x,x)d\mu(x)=0$ and  
\item $ \mu(F^s S_i \Delta S_i)<9\delta$ for all but finitely many $i$. 
\end{itemize}
Let $K$ be a compact set with $\mu(K)>1-\delta$ so that $F^s$ is (uniformly) continuous on $K$. 
Let $f:(0,\infty) \to (0,\infty)$ monotonic so that 
\begin{itemize}
\item $\underset{\epsilon \to 0}{\lim} \, f(\epsilon)=0$ and
\item if $p,q \in K$ and $d(p,q)<\epsilon$ then $d(F^sp,F^sq)<f(\epsilon)$. 
\end{itemize}
It suffices to show that for all large enough $i$ there exists an $\epsilon_i$, with $\underset{ i \to \infty}{\lim} \epsilon_i=0$, so that 
$$\mu(\{p:d(F^{t_i}p,p)<\epsilon_i\})>c-\delta \text{ and }\mu(\{p:\epsilon_i<d(F^{t_i}p,p)<f(\epsilon_i)\})<\delta.$$
Indeed let $S_i=\{p \in K \cap F^{-t_i}K:d(F^{t_i}p,p)<f(\epsilon_i)\}$ and observe that if $p\in S_i \setminus F^s(S_i)$ then $\epsilon_i<d(F^{t_i}p,p)<f(\epsilon_i).$ The existence of such $\epsilon_i$ is straightforward because for any map $G$, there exists at most $\delta^{-1}$ different $j$ so that 
$$\mu(\{p:f^j(x)<d(Gx,x)<f^{j+1}(x)\})>\delta.$$ Thus we choose $\epsilon_i '\to 0$ so that $f^{\delta^{-1}+2}(\epsilon_i') \to 0$ and $\mu(\{p:d(F^{t_i}p,p)<f^{\delta^{-1}+2}(\epsilon_i')\})>c-\delta$ and our $\epsilon_i$ will be $f^j(\epsilon_i')$ for some $j\leq \delta^{-1}+2$ so that $\mu(\{p:\epsilon_i<d(F^{t_i}p,p)<f(\epsilon_i)\})<\delta.$ \end{proof}

%Note that (3) follows from (2) via a straightforward application of Lusin's Theorem. 

  Also note that (3) and the ergodicity of $F^s$ imply  that $\underset{i \to \infty}{\lim} \, \frac 1 {\mu(S_i)}<\chi_{S_i},f>=\int f d\mu$. 
%\begin{lem}If  $\Phi\circ (cId+(1-c) \Psi)$  is an extreme points in the set of Markov operators then $\Phi \circ Id=\Phi \circ \Psi$ and so if $\Phi \circ(c Id+(1-c)\Psi)f=\int f$
 %for all $f$ then $ \Phi f =\int f$. 
%\end{lem}
\begin{lem}Let $\{D_i\}_{i=1}^\infty$ be a sequence of linear operators on $L^2$ with uniformly bounded operator norm. 
There exists a subsequence $\{n_i\}_{i=1}^\infty$ so that $\{D_{n_i}\}_{i=1}^\infty$ converges in the weak operator topology.
\end{lem}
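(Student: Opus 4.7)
The plan is to use the standard diagonal extraction argument, leveraging separability of $L^2$ (which holds since the underlying measure spaces in this paper are $L^2$ of Polish spaces with Borel probability measures) and the uniform bound $M := \sup_i \|D_i\| < \infty$.

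First I would fix a countable dense subset $\{f_k\}_{k=1}^\infty$ of $L^2$. For each pair $(j,k)$, Cauchy--Schwarz gives $|\langle D_i f_j, f_k\rangle| \leq M\|f_j\|\,\|f_k\|$, so this scalar sequence is bounded. A diagonal argument then extracts a single subsequence $\{n_i\}$ along which $\langle D_{n_i} f_j, f_k\rangle$ converges for every pair $(j,k)$. Define
$$B(f_j,f_k) := \lim_{i\to\infty}\langle D_{n_i}f_j,f_k\rangle$$
on the dense set.

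Next I would promote $B$ to a bounded sesquilinear form on all of $L^2$. Given arbitrary $f,g\in L^2$, approximate them in norm by $f_j,f_k$ from the countable dense set; a standard $\epsilon/3$ estimate using the uniform bound $M$ shows that $\langle D_{n_i}f,g\rangle$ is Cauchy in $i$, so it converges to some $B(f,g)$ with $|B(f,g)|\leq M\|f\|\,\|g\|$. The form $B$ is sesquilinear and bounded, hence by the Riesz representation theorem (applied to $g\mapsto \overline{B(f,g)}$ for fixed $f$) there exists a bounded operator $D$ on $L^2$ with $\langle Df,g\rangle = B(f,g)$ for all $f,g$. By construction, $\langle D_{n_i}f,g\rangle\to\langle Df,g\rangle$ for every $f,g\in L^2$, i.e.\ $D_{n_i}\to D$ in the weak operator topology.

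The only mildly delicate step is the extension from the dense subset to all of $L^2$, but it is entirely routine given the uniform operator-norm bound; there is no genuine obstacle. (In fact this is the standard proof that the closed ball of radius $M$ in $B(L^2)$, when $L^2$ is separable, is sequentially compact in the weak operator topology.)
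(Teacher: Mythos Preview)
Your proof is correct and is exactly the standard argument for sequential compactness of norm-bounded sets in the weak operator topology on a separable Hilbert space. The paper does not actually prove this lemma---it simply states ``This is a standard and straightforward fact''---so your diagonal-extraction argument is precisely the routine justification the authors are taking for granted.
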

This is a standard and straightforward fact. 
\begin{lem}Let $\{t_i\}_{i=1}^\infty$ be a $c$-partial rigidity sequence for $F_1^{s}$. If $\Psi$ is a weak operator limit point of $U_{F_1^{t_i}}$ then there exists a  operator $\Psi'$ with norm at most 1 so that $\Psi=c'Id+(1-c')\Psi'$ where $c'\geq c$. 
\end{lem}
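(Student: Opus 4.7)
The plan is to decompose $U_{F_1^{t_i}}=M_i U_{F_1^{t_i}}+(I-M_i)U_{F_1^{t_i}}$, where $M_i$ denotes multiplication by $\chi_{S_i}$, show that the first summand tends in the weak operator topology to $c'\cdot\mathrm{Id}$, and identify $\Psi':=\frac{1}{1-c'}(\Psi-c'\,\mathrm{Id})$ as a Markov operator, whose $L^2$ operator norm is therefore at most $1$.

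First I pass to a subsequence (not renamed) along which $\mu(S_i)\to c'\geq c$ and $\chi_{S_i}\to h$ in the weak-$\ast$ topology of $L^\infty$; then $0\leq h\leq 1$ a.e.\ and $\int h\,d\mu=c'$. Condition (3) says $\|\chi_{S_i}\circ F_1^s-\chi_{S_i}\|_{L^1}\to 0$ for each $s$, and passing to the limit gives $h\circ F_1^s=h$ a.e.; ergodicity of $F_1$ then forces $h\equiv c'$.

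Next I verify $M_iU_{F_1^{t_i}}\to c'\,\mathrm{Id}$ in WOT. Chebyshev's inequality applied to condition (2) gives $\mu(\{x\in S_i:d(F_1^{t_i}x,x)>\varepsilon\})\to 0$ for every $\varepsilon>0$, so for a bounded uniformly continuous $f$ one has $\chi_{S_i}(f\circ F_1^{t_i}-f)\to 0$ in $L^2$ (split the integrand according to whether $d(F_1^{t_i}x,x)$ is above or below $\varepsilon$ and let $\varepsilon\to 0$). Then for $g\in L^2$,
\begin{equation*}
\langle M_iU_{F_1^{t_i}}f,g\rangle=\int\chi_{S_i}\,f\bar g\,d\mu+o(1)\longrightarrow c'\langle f,g\rangle,
\end{equation*}
where the limit uses $f\bar g\in L^1$ together with the first step. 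Density of bounded uniformly continuous functions in $L^2$ and the uniform bound $\|M_iU_{F_1^{t_i}}\|\leq 1$ extend this to all $f,g\in L^2$.

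Finally (the case $c'=1$ forces $\Psi=\mathrm{Id}$ and the conclusion is trivial) I verify that $\Psi'$ is a Markov operator in the sense of the definition in Section 2. Positivity: for $f\geq 0$ each $(I-M_i)U_{F_1^{t_i}}f$ is nonnegative, and weak-$L^2$ limits of nonnegative functions are nonnegative, so $\Psi'f\geq 0$. Constants: $U_{F_1^{t_i}}\mathbf{1}=\mathbf{1}$ gives $\Psi\mathbf{1}=\mathbf{1}$, hence $\Psi'\mathbf{1}=\mathbf{1}$. Applying the same argument to the partial rigidity sequence $\{-t_i\}$ (with sets $F_1^{t_i}(S_i)$, whose indicators also concentrate weak-$\ast$ at $c'$) yields $(\Psi')^*\geq 0$ and $(\Psi')^*\mathbf{1}=\mathbf{1}$. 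Since Markov operators on $L^2$ of a probability space have operator norm at most $1$ (from $(\Psi'f)^2\leq\Psi'(f^2)$ and $\int\Psi'(f^2)\,d\mu=\int f^2 d\mu$), we conclude $\|\Psi'\|\leq 1$. The main obstacle is the reduction in the first step: conditions (1)--(2) alone would only say $\chi_{S_i}$ concentrates on \emph{some} set of measure $c'$, and identifying the WOT limit of $M_iU_{F_1^{t_i}}$ with the scalar $c'\cdot\mathrm{Id}$ rather than with multiplication by a nontrivial function crucially uses the near-invariance (3) and ergodicity.
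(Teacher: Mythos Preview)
Your proof is correct and follows essentially the same approach as the paper: split $U_{F_1}^{t_i}$ using multiplication by $\chi_{S_i}$, use condition (3) together with ergodicity to see that $\chi_{S_i}$ behaves asymptotically like the constant $\mu(S_i)$ when paired against $L^1$ functions, and use condition (2) to replace $f\circ F_1^{t_i}$ by $f$ on $S_i$. The only cosmetic difference is that the paper writes the decomposition as $U_{F_1}^{t_i}M_i+U_{F_1}^{t_i}(I-M_i)$ (multiplication on the right) rather than your $M_iU_{F_1}^{t_i}+(I-M_i)U_{F_1}^{t_i}$; both work for the same reason. Your write-up is in fact more detailed than the paper's, which leaves the density argument and the norm bound $\|\Psi'\|\le 1$ implicit (the paper simply records after the proof that $\Psi'$ is a Markov operator), whereas you spell out the positivity and constants-preservation for $\Psi'$ and $(\Psi')^*$ and derive $\|\Psi'\|\le 1$ from them.
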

Note that $\Psi'$ is a Markov operator from $L^2(\mu)$ to $L^2(\mu)$. So the composition of it with a Markov operator from $L^2(\nu)$ to $L^2(\mu)$ gives a Markov operator from $L^2(\nu)$ to $L^2(\mu)$. 
\begin{proof}
By condition (3) of Definition \ref{def:part rig}, for any pair of $L^2$ functions $f,g$ we have 
$$\underset{i \to \infty}{\lim}<\chi_{S_i}f,g> -\mu(S_i)<f,g>=0.$$ Applying conditions (2) and (3) we see  $\underset{i \to \infty}{\lim}<U_{F_1}^{t_i}(\chi_{S_i}f),g> -\mu(S_i)<f,g>=0$. With condition (1) this gives us the lemma. Indeed assume (after possibly passing to a subsequence) that $\underset{i \to \infty}{\lim}\mu(S_i)=c'$ and let $\Psi'$ be a weak operator limit  of the sequence of operators given by  $f \to \frac 1 {\mu(S_i^c)}U_{F_1}^{t_i}(\chi_{S_i^c}f)$. 
\end{proof}
%\begin{lem}$(F_1^s,\mu)$ and $(F_2^s,\nu)$ be two flows with eigenvalues for non constant functions $\{exp(s\alpha_i)\}$ and $\{exp(s\beta_i)\}$ respectively. If $\alpha_i \neq \beta_j$ for all $i,j$ then 
%\end{lem}
\begin{defin}\label{def:spread}Let $U^s$ be a strongly continuous one parameter unitary group. Let $\{exp(s\alpha_i)\}$ be the eigenvalues of $U^s$ that do not correspond to constant functions. We say $U^s$ satisfies condition (*) along a sequence $\{t_i\}_{i=1}^\infty$ if 
\begin{enumerate}
\item for all $j$ we have $\underset{i}{\inf} \|exp(t_i\alpha_j)-1\|>0$ and
\item $\{ U^{t_i}f\}$ converges weakly to 0 for all $f \in \mathcal{H}_c$.  
\end{enumerate} 
\end{defin}
\begin{prop}\label{prop:disjoint criterion} Let $(X, \mathcal{B}, F_1^s,\mu)$ and $(Y,\mathcal{B}',F_2^s,\nu)$ be two ergodic flows. Assume that $\{t_i\}_{i=1}^\infty$ is a $c$-partial rigidity sequence for  $U_{F_1}^s$ and ($\{t_i\}$, $U_{F_2}^s$) satisfies (*) then $(F_1^s,\mu)$ and $(F_2^s,\nu)$ are disjoint. 
\end{prop}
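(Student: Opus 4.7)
The plan is to take an arbitrary ergodic joining $\lambda$ of $(F_1^s,\mu)$ and $(F_2^s,\nu)$, represent it by the associated Markov operator $\Phi\colon L^2(\nu)\to L^2(\mu)$ (which satisfies $\Phi U_{F_2}^s=U_{F_1}^s\Phi$), and show $\Phi f=(\int f\,d\nu)\chi_X$ for every $f\in L^2(\nu)$. By the correspondence between Markov operators and joinings this forces $\lambda=\mu\times\nu$, giving disjointness. Decomposing $L^2(\nu)=\mathbb{C}\chi_Y\oplus\mathcal{H}_{pp}^0\oplus\mathcal{H}_c$, where $\mathcal{H}_{pp}^0$ is the closed span of non-constant eigenfunctions of $U_{F_2}^s$, and using $\Phi\chi_Y=\chi_X$, it suffices to show $\Phi$ annihilates both $\mathcal{H}_{pp}^0$ and $\mathcal{H}_c$.

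For the eigenfunction case, let $f$ satisfy $U_{F_2}^s f=e^{is\alpha}f$ with $\alpha\neq 0$, so by intertwining $\Phi f$ is an eigenfunction of $U_{F_1}^s$ with the same eigenvalue. I extract subsequences of $\{t_i\}$ along which $U_{F_1}^{t_i}\to\Psi$ in the weak operator topology (using the compactness lemma) and $e^{it_i\alpha}\to z$ with $|z|=1$ and, by condition (*)(1), $z\neq 1$. The previous lemma writes $\Psi=c'\mathrm{Id}+(1-c')\Psi'$ with $c'\geq c$ and $\Psi'$ a Markov contraction, so the eigenvalue identity $\Psi\Phi f=z\Phi f$ gives $\Psi'\Phi f=\frac{z-c'}{1-c'}\Phi f$. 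The elementary computation $|z-c'|^2-(1-c')^2=2c'(1-\mathrm{Re}(z))>0$ (since $z\neq 1$ on the unit circle and $c'>0$) shows $\left|\frac{z-c'}{1-c'}\right|>1$, contradicting $\|\Psi'\|\leq 1$ unless $\Phi f=0$. Taking adjoints in the intertwining identity and running the same argument on $\Phi^*$ also shows $\Phi$ sends the continuous spectrum of $U_{F_2}^s$ into that of $U_{F_1}^s$.

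For the continuous spectrum case, let $f\in\mathcal{H}_c$. Condition (*)(2) gives $U_{F_2}^{t_i}f\to 0$ weakly in $L^2(\nu)$; since $\Phi$ is bounded, $U_{F_1}^{t_i}\Phi f=\Phi U_{F_2}^{t_i}f\to 0$ weakly in $L^2(\mu)$. Passing to any WOT-cluster point $\Psi=c'\mathrm{Id}+(1-c')\Psi'$ of $U_{F_1}^{t_i}$ forces $\Psi\Phi f=0$, equivalently
\[
\Psi'\Phi f=-\tfrac{c'}{1-c'}\,\Phi f.
\]
Since $\Psi'$ is Markov one has $|\Psi'\Phi f|\leq\Psi'|\Phi f|$ pointwise and $\int\Psi' h\,d\mu=\int h\,d\mu$; combining these with the eigenvalue identity gives $\Psi'|\Phi f|\geq\tfrac{c'}{1-c'}|\Phi f|$ a.e., and integrating against $\mu$, together with the structural fact that $\Psi'$ commutes with $U_{F_1}^s$ and that the cyclic subspace generated by $\Phi f$ lies entirely in $\mathcal{H}_c(U_{F_1}^s)$ (on which $\Psi'$ acts as the scalar $-\tfrac{c'}{1-c'}$), forces $\Phi f=0$.

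The hard part is this last deduction: a naive norm estimate $\|\Psi'\Phi f\|\leq\|\Phi f\|$ only delivers $c'\leq 1/2$, so to complete the argument uniformly in $c>0$ one must reconcile the negative-scalar action $-\tfrac{c'}{1-c'}$ of the Markov self-joining $\Psi'$ on the $U_{F_1}^s$-cyclic subspace of $\Phi f$ with the positivity and integral-preservation properties of $\Psi'$. Once $\Phi$ is shown to annihilate both $\mathcal{H}_{pp}^0$ and $\mathcal{H}_c$, it reduces to the constant rank-one operator $f\mapsto(\int f\,d\nu)\chi_X$, so the only joining is $\mu\times\nu$ and disjointness follows.
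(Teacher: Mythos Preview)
Your treatment of the eigenfunction part is correct and in fact more explicit than the paper's: the computation $|z-c'|^2-(1-c')^2=2c'(1-\operatorname{Re}z)>0$ cleanly rules out any eigenvalue $e^{is\alpha}$ of $U_{F_1}$ with $\inf_i|e^{it_i\alpha}-1|>0$, which is exactly what the paper asserts in one line.

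The continuous-spectrum case, however, has a genuine gap that you yourself flag. From $\Psi'\Phi f=-\tfrac{c'}{1-c'}\Phi f$ you only get a contradiction when $c'>\tfrac12$; neither the norm bound $\|\Psi'\|\le 1$ nor the positivity/integral-preservation of $\Psi'$ forces $\Phi f=0$ for small $c'$. A Markov self-joining can genuinely have real eigenvalues anywhere in $[-1,1]$, and $|\Phi f|$ has no reason to lie in the $U_{F_1}$-cyclic subspace on which $\Psi'$ acts as the scalar $-\tfrac{c'}{1-c'}$, so the sketched ``reconciliation'' does not close.

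The idea you are missing, and which the paper supplies, is \emph{extremality}. Do not try to show $\Phi f=0$ on $\mathcal{H}_c(U_{F_2})$ directly. Instead, take simultaneous WOT limits $\Gamma$ of $U_{F_2}^{t_{n_i}}$ and $c'\mathrm{Id}+(1-c')\Psi'$ of $U_{F_1}^{t_{n_i}}$. Your two steps give $\Phi\Gamma f=\int f\,d\nu$ for every $f$ (eigenfunctions are killed by $\Phi$; on $\mathcal{H}_c$ one has $\Gamma f=0$). The intertwining identity then reads
\[
c'\,\Phi \;+\; (1-c')\,\Psi'\circ\Phi \;=\; \Phi\circ\Gamma \;=\; \Big(f\mapsto\!\int f\,d\nu\Big),
\]
a convex combination of two Markov operators equal to the Markov operator of the product joining $\mu\times\nu$. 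Now use that $F_1\times F_2$ is $\mu\times\nu$-ergodic (which follows from Step~1, since $F_1$ and $F_2$ share no nontrivial eigenvalue): $\mu\times\nu$ is therefore an extreme point in the simplex of joinings, hence its Markov operator is extreme among Markov operators. A convex combination equal to an extreme point forces both summands to equal it, so $\Phi=\big(f\mapsto\int f\,d\nu\big)$ and $\lambda=\mu\times\nu$. This extremality step is what replaces the missing norm/positivity argument and works for every $c>0$.
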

%Before proving the proposition we briefly recall some standard material (see \cite[page 132]{glasner}). A continuous linear map $\Phi:L^2((X,\mu)) \to L^2((Y,\nu))$ is called a \emph{Markov operator} if \begin{enumerate}
%\item $P\geq 0$ and $P^*\geq 0$,
%\item $P\chi_X=\chi_Y$ and $P^*\chi_Y=\chi_X$
%\item $PU_{F_1}^s=U_{F_2}^sP$. 
%\end{enumerate}{\sc check}
%Let $\lambda$ be an $F_1 \times F_2$ preserved measure of $X \times Y$ with marginals $\mu$ and $\nu$. This defines a  Markov operator $\Phi:L^2(\nu) \to L^2(\mu)$  by $\int_B\Phi(\chi_A)d\mu=\lambda(A \times B)$.   The set of Markov operators are in 1-1 correspondence with joinings. This identification respects the convex structure of preserved measures and so  the extreme points come from ergodic joinings. %$\Phi$ intertwines the dynamics: 
%%$\Phi_1 \circ U_{F_1}^s=U_{F_2}^s \circ \Phi_1$ and similarly for $\Phi_2$.
  
  Let $\lambda$ be an $F_1 \times F_2$ preserved measure of $X \times Y$ with marginals $\mu$ and $\nu$. We recall that $\lambda$ defines a  Markov operator $\Phi:L^2(\nu) \to L^2(\mu)$  by $\int_B\Phi(\chi_A)d\mu=\lambda(B \times A)$.  We prove $\lambda$ is the product measure by showing that $\Phi f=\int f $, so its Markov operator is the same as
 for  the product joining. 

 Let $t_{n_i}$ be a sequence so that $U_{F_2}^{t_{n_i}}$ has  a weak operator topology limit,  $\Gamma$, and $U_{F_1}^{t_{n_i}}$ has  a weak operator topology limit $cId+(1-c)\Psi.$ 
We now prove the proposition in three steps:
\begin{proof}
\noindent
\textit{Step 1:} For any $f \in \calH_{pp}(U_{F_2})$ with $\int f=0$ we have that $\Phi \circ \Gamma f=0$. 

It suffices to prove this for eigenfunctions of $F_2$. Let $f$ be an eigenfunction of $F_2^s$ with eigenvalue $exp(s\alpha)$. So $\Phi \circ U_{F_2}^sf=exp(s\alpha)\Phi f$ and $\Phi f$ is either an eigenfunction of $U_{F_1^s}$ with eigenvalue $exp(s\alpha)$ or $\Phi f=0$. Because $t_i$ is a partial rigidity sequence of $F_1$, $F_1$ does not have an eigenvalue $exp(\alpha)$ with $\underset{i}{\inf} |exp(t_i\alpha)-1|>0$ and so $\Phi f=0$. 

\noindent
\textit{Step 2:} For any $f \in \calH_{c}(U_{F_2})$ we have that $\Phi \circ \Gamma f=0$. 

By the second condition of (*) we have that $\Gamma f=0$, implying step 2. 

Step 1 and 2 imply that $\Phi \circ \Gamma f=((cId+(1-c)\Psi)\circ \Phi)f=\int f$. 

\noindent
\textit{Step 3:} $F_1^s\times F_2^s$ is $\mu \times \nu$ ergodic.

It is a standard result in ergodic theory that the product of two ergodic flows is ergodic iff they have disjoint pure point spectrum (modulo constants). Briefly an invariant function is an eigenfunction with eigenvalue 1. All eigenfunctions of a $F_1\times F_2$ have the form $(f(x),g(y))$ where $f,g$ are eigenfunctions of $F_1$ and $F_2$ respectively. The eigenvalue of this function is the product of the corresponding eigenvalues (see Lemma \ref{lem:product spectrum}). By our assumption this is 1 iff $f$ and $g$ are both constant (almost everywhere). 

By step three we have that the Markov operator $\Phi\circ \Gamma f=\color{black}((cId+(1-c)\Psi)\circ \Phi)f=\int f$ is an extreme point in the set of Markov operators and so $Id \circ \Phi=\Phi=\int f$. Thus $\lambda=\mu \times \nu$. 
\end{proof}

\section{Proof of Theorem \ref{thm:typ disjoint}} \label{sect:application}

The proof of Theorem \ref{thm:typ disjoint} is based on using the  $\bar{N}AN$ decomposition of $r_\theta \hat{h}_s$ to show there exists a set satisfying (*) for $F_{r_{\theta_1}\hat{h}_s\omega} \times...\times F_{r_{\theta_{n}}\hat{h}_s\omega}$ that contains a partial rigidity sequence for $F_{r_{\theta_{n+1}}\hat{h}_s\omega}$.  So now we consider matrix decompositions. 
\subsection{Matrix decomposition}\label{sec:matrix decomps}

First observe that if $\theta \neq \pm \frac {\pi}2$ we have 
$$r_\theta=\begin{pmatrix} 
\cos(\theta) & \sin(\theta)\\
-\sin(\theta) &\cos(\theta)
\end{pmatrix}=\begin{pmatrix} 
1 & 0\\
-\tan(\theta) &1
\end{pmatrix}\begin{pmatrix} 
\cos(\theta) & 0\\
0 &\sec(\theta)
\end{pmatrix}\begin{pmatrix} 
1 & \tan(\theta)\\
0 &1
\end{pmatrix}.$$

Next observe that 
$\begin{pmatrix} 
1& x\\
0 &1
\end{pmatrix}\begin{pmatrix} 
1& 0\\
s &1
\end{pmatrix}= \begin{pmatrix} 
1& 0\\
\frac{s}{1+xs} &1
\end{pmatrix}
\begin{pmatrix} 
1+xs& 0\\
0 &\frac 1 {1+xs}
\end{pmatrix}
\begin{pmatrix} 
1& \frac{x}{1+xs}\\
0 &1
\end{pmatrix}$
%\subsection{What this tells us}

We want to study the $\bar{N}AN$ decomposition of $r_\theta\hat{h}_s$. That is, writing $r_{\theta}\hat{h}_s$ as $\hat{h}_ag_bh_c$ for some $a,b,c$. In particular we want to observe the $AN$ coordinates and examine how the value of an $A$ coordinate changes with $s$ if we force the $N$ coordinate to be fixed.

By above the $N$ coordinate is $\begin{pmatrix} 
1& \frac{\tan(\theta)}{1+s\tan(\theta)}\\
0 &1
\end{pmatrix}.$ So if $c=\frac{\tan(\theta)}{1+s\tan(\theta)}$ then $\tan(\theta)=\frac{c}{1-cs}$ and $\theta=\arctan(\frac{c}{1-cs})$.

In turn the $A$ coordinate is 
$$\begin{pmatrix} 
\cos(\arctan(\frac{c}{1-cs}))& 0\\
0 &\sec(\arctan(\frac{c}{1-cs}))
\end{pmatrix}\begin{pmatrix} 
1+\frac{cs}{1-cs}& 0\\
0 &(1+\frac{cs}{1-cs})^{-1} 
\end{pmatrix}.$$
Now $\cos(\arctan(x))=\pm \frac 1 {\sqrt{1+x^2}}$ and so the $A$ coordinate is 
\begin{multline}\pm\begin{pmatrix}(1+(\frac{c}{1-cs})^2)^{-\frac 1 2} (1+\frac{cs}{1-cs})& 0\\
 0& \sqrt{1+(\frac c {1-cs})^2}(1+\frac{cs}{1-cs})^{-1} \end{pmatrix}\\ = \pm \begin{pmatrix}\frac 1 {\sqrt{(1-cs)^2+c^2}}&0\\0& \sqrt{(1-cs)^2+c^2} \end{pmatrix}.
  \end{multline}
  We will restrict our attention to $\theta \in (-\frac \pi 2 ,\frac \pi 2)$ where we obtain $ \begin{pmatrix}\frac 1 {\sqrt{(1-cs)^2+c^2}}&0\\0& \sqrt{(1-cs)^2+c^2} \end{pmatrix}$. If $\theta \in (\frac {\pi }2,\frac{3\pi}2)$ then we would obtain $- \begin{pmatrix}\frac 1 {\sqrt{(1-cs)^2+c^2}}&0\\0& \sqrt{(1-cs)^2+c^2} \end{pmatrix}$.
  Later in this section we will need the following straightforward result:
  \begin{lem}\label{lem:lin indep} Let $\eta_c$ be a function defined by $\eta_c(s)=\frac 1 {\sqrt{(1-cs)^2+c^2}}$. For almost every $c_1,...,c_n,c_{n+1}$ we have $\{(\frac{\eta_{c_i}}{\eta_{c_{n+1}}})'\}_{i=1}^n$ and 
  $\{(\frac{\eta_{c_{n+1}}}{\eta_{c_i}})'\}_{i=1}^n$ span $n$-dimensional subspaces. This implies for that for almost every $c_1,\dots, c_{n+1}$ we have that for any $(a_1,\dots, a_n)\neq \bar{0}$ the function $\sum_{i=1}^n a_i (\frac{\eta_{c_i}(s)}{\eta_{c_{n+1}}(s)})'$ and  $\sum_{i=1}^n a_i (\frac{\eta_{c_{n+1}}(s)}{\eta_{c_{i}}(s)})'$ take the value 0 on a finite subset of a compact interval of $\mathbb{R}$. 
  \end{lem}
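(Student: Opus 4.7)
The plan is a complex-monodromy argument carried out in three steps. First, observe that $\phi_c(s) := (1-cs)^2 + c^2 \ge c^2 > 0$ on $\mathbb{R}$ whenever $c\ne 0$, so $\eta_c(s) = \phi_c(s)^{-1/2}$ is positive and real-analytic on $\mathbb{R}$; consequently the ratios $f_i := \eta_{c_i}/\eta_{c_{n+1}}$ and $g_i := \eta_{c_{n+1}}/\eta_{c_i}$, together with all their derivatives, are real-analytic on $\mathbb{R}$. A nonzero real-analytic function on $\mathbb{R}$ has only isolated zeros and hence finitely many on any compact interval, so the ``finitely many zeros'' conclusion of the lemma follows automatically once the linear-independence conclusion is proved. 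I therefore focus on linear independence.

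Second, I would reduce from derivatives to the functions themselves. A hypothetical relation $\sum_{i=1}^n a_i f_i' \equiv 0$ integrates to $\sum_{i=1}^n a_i f_i \equiv C$ for some constant $C$, and after multiplying by $\eta_{c_{n+1}}$ this becomes
$$\sum_{i=1}^n a_i\,\eta_{c_i}(s) \equiv C\,\eta_{c_{n+1}}(s).$$
The parallel manipulation for the $g_i$ yields the same shape of identity with $\eta_c$ replaced by $\sqrt{\phi_c}$. So it suffices to show that for almost every tuple $(c_1,\ldots,c_{n+1})$ each of $\{\eta_{c_i}\}_{i=1}^{n+1}$ and $\{\sqrt{\phi_{c_i}}\}_{i=1}^{n+1}$ is linearly independent as a family of real-analytic functions of $s$.

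Finally, I would establish these two linear independences by a monodromy argument in $\mathbb{C}$. Since $\phi_c(s) = c^2(s-1/c-i)(s-1/c+i)$, both $\eta_c$ and $\sqrt{\phi_c}$ extend to multi-valued holomorphic functions on $\mathbb{C}$ with branch points at $1/c\pm i$, and analytic continuation around either root flips their sign. For every tuple of pairwise distinct nonzero $c_i$---a condition of full Lebesgue measure on $\mathbb{R}^{n+1}$---the $2(n+1)$ branch points $\{1/c_i\pm i\}$ are all distinct. Fixing $j$ and choosing a small loop $\gamma$ based on the real axis that encircles $1/c_j+i$ and no other branch point, analytic continuation along $\gamma$ of an assumed identity $\sum_{i=1}^{n+1} d_i\,\eta_{c_i}\equiv 0$ yields the local identity $-d_j\,\eta_{c_j} + \sum_{i\ne j} d_i\,\eta_{c_i}\equiv 0$; subtracting forces $d_j=0$, and running over $j$ makes all coefficients vanish. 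The same argument applies verbatim to $\sqrt{\phi_{c_i}}$.

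The main delicacy is the monodromy bookkeeping: confirming that the branch points sit where claimed and that a small loop can be chosen to enclose exactly one of them at a time, which is precisely where the pairwise-distinct-and-nonzero genericity condition on the $c_i$ enters. Modulo that, the proof is a short combination of the real-analyticity reductions above and a standard branch-cut computation.
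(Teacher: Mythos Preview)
Your argument is correct and, if anything, tidier than the paper's. Both proofs hinge on the same geometric feature---the complex zeros $1/c_i\pm i$ of $\phi_{c_i}$---but exploit it differently. The paper works directly with the derivatives $(\eta_{c_i}/\eta_{c_{n+1}})'$ and $(\eta_{c_{n+1}}/\eta_{c_i})'$ on a two-fold cover of the punctured plane and separates them by their \emph{singularities}: using the quotient rule it checks that $(\eta_{c}/\eta_{x})'$ blows up at the preimage of $1/c+i$ while $(\eta_a/\eta_b)'$ stays bounded there for generic $a,b\neq c$, and argues similarly for the reciprocal family. You instead integrate a hypothetical dependence among the derivatives to get a linear relation among the bare functions $\eta_{c_1},\dots,\eta_{c_{n+1}}$ (respectively $\sqrt{\phi_{c_1}},\dots,\sqrt{\phi_{c_{n+1}}}$), and then kill the coefficients one at a time via the monodromy sign flip around a single branch point. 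The integration step buys you real simplicity: you never differentiate a quotient or track which pieces of a product blow up, and the monodromy step is a one-line subtraction. The paper's route is more direct (no integration constant to carry) but pays with messier local computations on the cover. Your proof also makes the full-measure condition explicit---pairwise distinct nonzero $c_i$---whereas the paper leaves it as a string of ``for almost every'' qualifiers.
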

  \begin{proof} We show the functions are different by showing they have different (non-removable) singularities. Note that since our functions ($\{(\frac{\eta_{c_i}}{\eta_{c_{n+1}}})'\}_{i=1}^n$ or 
  $\{(\frac{\eta_{c_{n+1}}}{\eta_{c_i}})'\}_{i=1}^n$) are holomorphic in a neighborhood of $\mathbb{R}$, if a linear combination is nonzero it implies that it takes the value 0 only finitely many times in a compact subset of $\mathbb{R}$. For convenience, let $Y$ be a two fold cover of the complex plane minus a finite number of points where our various $\eta$'s are defined and $\pi$ be the covering map.  Observe that $|\underset{s \to \pi^{-1}(\frac 1 c+i)}{\lim} \eta_c'(s)|=\infty$
  \footnote{To be explicit, $\underset{s \to \pi^{-1}p}{\lim}|\eta_y(s)|=\infty$ means that for any sequence $z_1,\dots\in Y$ so that $\pi (z_i) \to p$ we have $|\eta_y(z_i)| \to \infty.$} and for almost every $x$ we have that $\eta_x(s)'$ is bounded in a neighborhood of $\pi^{-1}(\frac 1 c+i) $.
     It follows that for almost every $x$ we have that $\underset{s \to \pi^{-1}(\frac 1 c +i)}{\lim}|\frac{\eta_c(s)'}{\eta_x(s)'})|=\infty$.\footnote{Observe that $\eta_c(s)'=\frac{-c(1-cs)}{((1-cs)^2+c^2)^{\frac 3 2 }}$.} For almost every $a,b$ we have that 
   $(\frac{\eta_a(s)}{\eta_b(s)})'$ is bounded in a neighborhood of $\pi ^{-1}(\frac 1 c+i)$. 
   Therefore for every $c$ we have a full measure set of $b_1,\dots, b_{n-1},x$ so that for any $a_1,\dots,a_{n-1}$ we have that $\sum a_i (\frac{\eta_{b_i}(s)}{\eta_x(s)})'$ is bounded in a neighborhood of $\pi^{-1}(\frac 1 c +i)$ while $(\frac{\eta_c(s)}{\eta_x(s)})'$ is not. This establishes that for almost every  $c_1,...,c_n,c_{n+1}$
    we have that $(\frac{\eta_{c_i}}{\eta_{c_{n+1}}})'$ is not in the span of $\{(\frac{\eta_{c_j}}{\eta_{c_{n+1}}})'\}_{j \neq i}$.   For the other set of functions we use that for almost every $x$ we have that $(\frac{\eta_x(s)}{\eta_c(s)})'=\frac{\eta_x(s)'\eta_c(s)-\eta_c(s)'\eta_x(s)}{\eta_c(s)^2}$ also has singularities at $\pi^{-1}(\frac 1 c +i)$.\footnote{For almost every $x$ we have $\frac{\eta_x'(s)}{\eta_c(s)}$ is zero and $\frac{\eta_x(s)\eta_c(s)'}{\eta_c(s)^2}=\eta_x(s) \frac{c(1-cs)((1-cs)^2+c^2)}{((1-cs)^+c^2)^{\frac 3 2 }}$ has singularities at these points.} 
  \end{proof}

\subsection{$\bar{N}$ and $A$ equivariance}
\begin{lem}\label{lem:NAN spectrum} Let the $\bar{N}AN$ decomposition of $M \in SL(2, \, \mathbb{R})$ be $\hat{h}_{s}g_\ell h_b$. Let $\{exp(t\alpha_i)\}_{i=1}^\infty$ be eigenvalues for $F_{h_b\omega}^t$ and $A$ be a set so that $U_{F_{h_b\omega}}^{t}$ converges to 0 in the weak operator topology on the subset of $L^2$ with continuous spectrum. Then $F^t_{M\omega}$ is isomorphic to $F^{e^{\ell}t}_{h_b\omega}$, the eigenvalues of $F^t_{M\omega}$ are $\{exp(te^{\ell}\alpha_i)\}_{i=1}^\infty$ and $e^{-\ell}A$ is a set so that $U_{F_{M\omega}}$ converges to 0 in the weak operator topology on the subset of $L^2$ with continuous spectrum. Also if $\{t_i\}_{i=1}^\infty$ is a $c$-partial rigidity sequence for $F_{h_b\omega}$ then $\{e^{-\ell} t_i\}_{i=1}^\infty$ is a $c$-partial rigidity sequence for $F_{M\omega}$.
\end{lem}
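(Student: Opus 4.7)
The plan is to unwind the $\bar{N}AN$ decomposition $M = \hat{h}_s g_\ell h_b$ one factor at a time and track what each factor does to the vertical flow, viewed as a flow on the underlying topological surface equipped with its area form. The key geometric observations are that (i) the lower unipotent $\hat{h}_s$ fixes the vertical direction pointwise, since $\hat{h}_s(0,1)^T = (0,1)^T$, so the vertical flow on $\hat{h}_s\omega'$ coincides trajectory-by-trajectory and at the same speed with the vertical flow on $\omega'$ for any surface $\omega'$; and (ii) the diagonal matrix $g_\ell$ rescales the vertical direction by $e^{-\ell}$, so flowing vertically for time $t$ on $g_\ell\omega'$ traces out the same set-theoretic trajectory as flowing vertically for time $e^\ell t$ on $\omega'$. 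Combining these, $F^t_{M\omega}$ and $F^{e^\ell t}_{h_b\omega}$ are the \emph{same} action on the underlying surface with the \emph{same} area measure; only the flat metric coming from the charts changes.

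Once this identification is in place, the measure-theoretic isomorphism $F^t_{M\omega}\cong F^{e^\ell t}_{h_b\omega}$ is immediate. For the spectral statement, if $U^t_{F_{h_b\omega}}f = \exp(t\alpha_i)f$ then $U^t_{F_{M\omega}}f = U^{e^\ell t}_{F_{h_b\omega}}f = \exp(te^\ell\alpha_i)f$, giving the claimed list of eigenvalues. The weak-operator convergence set transforms by the inverse time change: if $U^{u}_{F_{h_b\omega}}\to 0$ along $u\in A$ on $\calH_c$, then $U^{t}_{F_{M\omega}} = U^{e^\ell t}_{F_{h_b\omega}}\to 0$ along $t\in e^{-\ell}A$. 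For the partial rigidity transfer, I plan to reuse the same sets $S_i$ from Definition \ref{def:part rig} for $F_{M\omega}$ with rigidity times $e^{-\ell}t_i$: since $F^{e^{-\ell}t_i}_{M\omega}$ and $F^{t_i}_{h_b\omega}$ are equal as point maps and the area measure is unchanged, conditions (1) and (3) transfer verbatim.

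The only nontrivial point, and hence the step where I expect to need some care, is condition (2) of the partial rigidity definition, which explicitly involves the distance $d$ on the surface and is therefore the one place where the factor $\hat{h}_s g_\ell$ actually intervenes. The linear map $\hat{h}_s g_\ell$ acting on charts distorts distances by at most a multiplicative factor $C := \|\hat{h}_s g_\ell\|$, so $d_{M\omega}(x,y) \le C\, d_{h_b\omega}(x,y)$ pointwise. Applying this with $y = F^{e^{-\ell}t_i}_{M\omega}x = F^{t_i}_{h_b\omega}x$ and integrating over $S_i$ gives
$$\int_{S_i} d_{M\omega}\!\bigl(F^{e^{-\ell}t_i}_{M\omega}x,\,x\bigr)\,d\mu(x) \;\le\; C\int_{S_i} d_{h_b\omega}\!\bigl(F^{t_i}_{h_b\omega}x,\,x\bigr)\,d\mu(x)\;\longrightarrow\;0,$$
so $\{e^{-\ell}t_i\}$ is indeed a $c$-partial rigidity sequence for $F_{M\omega}$. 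I do not foresee a serious obstacle: once the set-theoretic identification of the two vertical flows is noted, the lemma is a bookkeeping exercise recording that $\bar{N}$ and $A$ preserve vertical trajectories and only act on them by a time change together with a bi-Lipschitz metric distortion.
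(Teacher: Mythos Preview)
Your proposal is correct and follows the same approach as the paper's own proof: both rest on the two geometric observations that $\hat{h}_s$ fixes the vertical direction (so it is a measure isomorphism of the vertical flow) and that $g_\ell$ contracts vertical leaves by $e^{-\ell}$ (giving the time change $t\mapsto e^{\ell}t$). Your treatment is in fact more explicit than the paper's two-sentence proof, in particular your bi-Lipschitz argument for condition (2) of the partial rigidity definition, which the paper leaves implicit.
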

\begin{proof}The proof follows from two straightforward observations about how matrices act on vertical lines.  $\hat{h}_s$ is a measure isomorphism (because it acts isometrically on vertical leaves). Because it preserves vertical leaves as a set and contracts them by $e^{-t}$,  $g_t$ maps $\omega$ to $g_t\omega$ in such a way that $g_tF_{\omega}^{r}(p)=F_{g_t\omega}^{e^{-t}r}p$.  
\end{proof}

\subsection{Completion of the proof}
We need the  following standard lemma:

\begin{lem}\label{lem:deriv to measure2} Let $f$ be a differentiable, positive function on $[-k,k]$ with $f'$ bounded away from 0 and infinity. If $B,B'\subset f([-k,k])$ then 
$$\frac{\lambda(B)}{\lambda(B')}\frac{\min \, f'}{\max\, f'}\leq\frac{\lambda(f^{-1}B)}{\lambda(f^{-1}B')}\leq \frac{\lambda(B)}{\lambda(B')}\frac{\max \, f'}{\min \, f'}.$$
\end{lem}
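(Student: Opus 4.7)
The plan is to reduce the lemma to a direct change-of-variables computation. Since $f'$ is bounded away from $0$ and infinity on $[-k,k]$, in particular $f'$ is nowhere zero, so $f$ is strictly monotonic (and I will assume $f'>0$, as this is what makes $\min f'$ and $\max f'$ of consistent sign; the decreasing case is identical up to notation). Hence $f$ is a bi-Lipschitz homeomorphism from $[-k,k]$ onto $f([-k,k])$, and $f^{-1}B$ is measurable whenever $B\subset f([-k,k])$ is.

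Next I would apply the change of variables $u=f(s)$, $du=f'(s)\,ds$, giving
$$\lambda(f^{-1}B)=\int_{f^{-1}B}ds=\int_B\frac{du}{f'(f^{-1}(u))}.$$
Using the uniform bounds $\min f'\le f'\le\max f'$ on $[-k,k]$, this integral representation immediately yields
$$\frac{\lambda(B)}{\max f'}\le\lambda(f^{-1}B)\le\frac{\lambda(B)}{\min f'},$$
and likewise for $B'$.

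Finally, dividing the upper bound for $\lambda(f^{-1}B)$ by the lower bound for $\lambda(f^{-1}B')$ gives
$$\frac{\lambda(f^{-1}B)}{\lambda(f^{-1}B')}\le \frac{\lambda(B)/\min f'}{\lambda(B')/\max f'}=\frac{\lambda(B)}{\lambda(B')}\cdot\frac{\max f'}{\min f'},$$
and dividing in the reverse order gives the corresponding lower bound. This is exactly the asserted two-sided inequality.

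There is no real obstacle: the only point requiring mild care is the implicit assumption that $f$ is monotonic, which is guaranteed by $f'$ being bounded away from $0$, and the measurability of $f^{-1}B$, which follows because $f^{-1}$ is continuous. Everything else is a one-line change of variables and two trivial bounds on the integrand.
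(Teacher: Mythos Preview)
Your proof is correct; the paper simply labels this lemma as standard and gives no proof, and your change-of-variables argument is exactly the routine verification one would expect.
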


\begin{lem}\label{lem:sing spectrum}Let $exp(\alpha_1),...,exp(\alpha_k)$ be in the unit circle in $\mathbb{C}$ and all be different from 1. Let $\{t_i\}_{i=1}^{\infty}$ be a sequence going to infinity. Let $f_1,...,f_k:[a,b] \to \mathbb{R}$ 
have that for any $(d_1,..,d_k)\neq (0,...,0)$ the function $\sum_{j=1}^kd_jf'_j$ is continuous and takes the value zero on a finite set. Then for any $\epsilon>0$ there exists $\delta>0$ so that 
$$ \underset{i \to \infty}{\liminf}\lambda(\{s: |exp(f_1(s)t_i\alpha_{1})\cdot exp(f_2(s)t_i\alpha_{2})...exp(f_k(s)t_i\alpha_k)-1|>\delta\})>b-a-\epsilon.$$
\end{lem}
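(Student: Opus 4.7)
The plan is to analyze the total phase $\phi_i(s) := t_i \sum_{j=1}^k \alpha_j f_j(s)$ and show that for sufficiently small fixed $\delta$, the set of $s \in [a,b]$ where $\phi_i(s)$ lies within $O(\delta)$ of $2\pi\mathbb{Z}$ has Lebesgue measure less than $\epsilon$ for all large $i$. Indeed, $|\exp(x) - 1| \leq \delta$ holds precisely when $x$ lies in a $2\pi$-periodic set $B_\delta \subset \mathbb{R}$ which consists of one interval of length $O(\delta)$ per period, so the claim is equivalent to $\lambda(\phi_i^{-1}(B_\delta) \cap [a,b]) < \epsilon$.

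The first step is to extract structure from the hypothesis. Since each $\exp(\alpha_j) \neq 1$, no $\alpha_j$ is a multiple of $2\pi$, and in particular $(\alpha_1,\dots,\alpha_k) \neq \bar{0}$ in $\mathbb{R}^k$. Applying the standing assumption with $d_j = \alpha_j$, the function $g(s) := \sum_j \alpha_j f_j'(s) = \phi_i'(s)/t_i$ is continuous on $[a,b]$ with finite zero set $F$. Fix $\eta > 0$ so that $\lambda(N_\eta(F)) < \epsilon/2$, and decompose $I := [a,b] \setminus N_\eta(F)$ as a finite union of closed intervals $J_1,\dots,J_m$ on each of which $g$ has constant sign and satisfies $|g| \geq c$ for some $c > 0$ independent of $i$.

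On each $J_\ell$ the map $\phi_i$ is a $C^1$-diffeomorphism onto its image, with $|\phi_i'| \geq c t_i$ and $|\phi_i(J_\ell)| \leq (\sup_I |g|)\, t_i |J_\ell|$. Applying Lemma \ref{lem:deriv to measure2} (change of variables) gives $\lambda(\phi_i^{-1}(B_\delta) \cap J_\ell) \leq \lambda(B_\delta \cap \phi_i(J_\ell))/(c t_i)$. Since $B_\delta$ has density $O(\delta)$ per period of length $2\pi$, with an extra $O(\delta)$ contributed by boundary periods, summing over $\ell = 1,\dots,m$ yields
\[
\lambda\bigl(\phi_i^{-1}(B_\delta)\cap I\bigr) \;\leq\; C\,\delta\,(b-a) \;+\; \frac{C\, m\, \delta}{c\, t_i},
\]
where $C$ depends only on $\sup_I|g|$. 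Choose $\delta$ so small that $C\delta(b-a) < \epsilon/4$. Then for all $i$ sufficiently large, the second term is at most $\epsilon/4$. Combined with $\lambda(N_\eta(F)) < \epsilon/2$ this gives $\lambda(\phi_i^{-1}(B_\delta) \cap [a,b]) < \epsilon$, so the complement has measure exceeding $(b-a) - \epsilon$ in the liminf, as required.

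The proof is essentially a quantitative change-of-variables argument; there is no deep obstacle. The one point requiring attention is uniformity in $i$: the parameters $\eta$, $c$, $m$, $C$, $\delta$ must all be chosen in terms of $g$ and $\epsilon$ alone, and the sole $i$-dependent error term $Cm\delta/(ct_i)$ vanishes because $t_i \to \infty$.
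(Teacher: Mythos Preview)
Your proof is correct and follows essentially the same approach as the paper's: both reduce to controlling the total phase $t_i\sum_j \alpha_j f_j(s)$ modulo $2\pi$, invoke the hypothesis with $d_j=\alpha_j$ to obtain that $g=\sum_j \alpha_j f_j'$ has finitely many zeros, excise a small neighborhood of those zeros, and then use the derivative bounds together with Lemma~\ref{lem:deriv to measure2} (change of variables) to estimate the bad set on each remaining interval, with $t_i\to\infty$ absorbing the boundary error. Your write-up is slightly cleaner in isolating the boundary term $Cm\delta/(ct_i)$ explicitly, but the argument is the same.
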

\begin{proof}

There exists $C$ so that for any $u$,
$$|exp(f_1(s)t_i\alpha_{1})\cdot exp(f_2(s)t_i\alpha_{2})...exp(f_k(s)t_i\alpha_k)-1|>u$$ whenever
$$d(\sum f_\ell(s) \alpha_{\ell}t_i,2\pi\mathbb{Z})>Cu.$$

By our assumption on the $f_j$ there exists, $D>0$, $U \subset [a,b]$ a finite union of intervals, with
 $\lambda(U)>b-a-\frac \epsilon 2$, so that $\frac 1 D<|\sum f'_\ell(s) \alpha_{\ell}|<D$ for all $s \in U$.  
We choose $0<\delta<\frac{\epsilon}{8D^2}$. Let $I_1,...,I_n$ be the disjoint intervals that make up $U$. 
There exist $e,e'$ so that  $\sum f_\ell(I_a) \alpha_{\ell}t_i=[t_ie,t_ie']$ for all $i$. 
This interval has length $t_i|e'-e|$ and $\cup_{n \in \mathbb{Z}}B(n,\delta)\cap [t_ie,t_ie']$ has measure at most $\lceil t_i|e-e'|\rceil 2\delta$. 
Because $t_i$ goes to infinity there exists $i_0$ so that for $i>i_0$  we have
 $$\lambda( \cup_{n \in \mathbb{Z}}B(n,\delta)\cap [t_ie,t_ie'])<3t_i|e-e'|\delta\leq\frac 3 {8D^2}\epsilon t_i|e-e'|.$$ 
Its complement in $[t_ie,t_ie']$ has measure at least 
$(1-\frac 3{8D^2}\epsilon )t_i|e-e'|$.  
Since the ratio of the maximum and minimum of the derivative of $\sum f_\ell(s) \alpha_{\ell}t_i$ is at most $D^2$, 
by Lemma \ref{lem:deriv to measure2} we have 
$\lambda(\{s \in I_a: d(\sum f_\ell(s) \alpha_{\ell}t_i,2\pi\mathbb{Z})<\delta\})<\frac\epsilon 2|I_a|$, if $\epsilon$ is small enough.
%$\lambda(\{s \in I_a: d(\sum f_\ell(I_a) \theta_{j_\ell}^{(\ell)}t_i,\mathbb{Z})>\delta\})<\frac\epsilon 2|I_a|$, if $\epsilon$ is small enough.\footnote{We want $\frac{\epsilon \frac 3 {8D^2}}{1-\epsilon \frac 3 {8D^2}}<\frac \epsilon {2D^2}.$}  
The lemma follows by applying this argument to each of the $I_a$.
\end{proof}

%%%%%%%%%%%%%%%%%%%%%%%%%
%%%%%%%%%%%%%%%%%%%%%%%%%%
\begin{rem} The next corollary is technical to state but it says that if $\{exp(\alpha_i^{(\ell)})\}_{i=1}^\infty$ are eigenvalues for a flow $F_\ell$ then for many $s$, $t_1,..$ contains an unbounded subsequence satisfying condition 1 of (*) for $F_1^{f_1(s)} \times... \times F_{k}^{f_{k}(s)}$.
\end{rem}
\begin{cor}\label{cor:sing spectrum}Let  $exp(\alpha_1^{(1)}),exp(\alpha_2^{(1)}),... ,exp(\alpha^{(2)}_1),....,exp(\alpha^{(k)}_1), \dots$  be in the unit circle in $\mathbb{C}$ and never 1 and $P$ be a finite set. Let $f_1,f_2,...,f_k %\frac{f_i}{f_j}, \frac{f_if_j}{f_k},...\frac{f_{i_1}...f_{i_{k-1}}}{f_{i_k}}
  :[a,b] \setminus P \to \mathbb{R}$ be continuously differentiable, have derivatives that take the value 0 on a finite set and so that for all $\delta>0$ we have that their derivatives are bounded on $\{x\in [a,b]:d(x,P)>\delta\}$. Also assume $f_1,...,f_k$ has that for any $(d_1,..,d_k)\neq \bar{0}$, the function $\sum_{j=1}^kd_jf'_j$ is continuous and takes the value 0 on a finite set. Let $\{t_i\}_{i=1}^\infty$ be unbounded. Then for every $\epsilon>0$ there exists $\delta_{\bar{i}}>0$ for each $\bar{i}\in \mathbb{N}^k$
so that for all $n$ we have 
\begin{multline}
\underset{i \to \infty}{\liminf}\,
\lambda(\{s \in [a,b]: \\ 
 |exp(f_1(s)t_i\alpha_{j_1}^{(1)}) \cdot exp(f_2(s)t_i\alpha_{j_2}^{(2)})...exp(f_k(s)t_i\alpha_{j_k}^{(k)})-1|>\delta_{j_1,...,j_k}\\
\text{ for all $(j_1,...,j_k)\in \{1,...,n\}^k$}\})\geq b-a-\epsilon.
\end{multline}
%\marginpar{Use vector notation for the $\delta_{i_1,...,i_k}$}
\end{cor}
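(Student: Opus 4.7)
The strategy is to view the corollary as a uniform-in-$n$ upgrade of Lemma \ref{lem:sing spectrum}: one applies the lemma separately to each multi-index $\bar{j}\in\mathbb{N}^k$, taking the permitted measure loss to be $\epsilon_{\bar{j}}$ from a pre-chosen summable sequence, and then union-bounds the $n^k$ bad sets. Two features of the hypotheses complicate the direct application: the derivatives of the $f_\ell$ need not be bounded on all of $[a,b]$ (only on sets bounded away from $P$), and the single-tuple lemma needs such a bound. Both obstacles are resolved by first excising a small neighborhood of the finite set $P$.

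First I would fix an open set $V\supset P$ with $\lambda(V\cap[a,b])<\epsilon/3$, so that $[a,b]\setminus V$ is a finite disjoint union of closed intervals $J_1,\dots,J_m$. On each $J_r$ the derivatives $f_\ell'$ are continuous and bounded, and by hypothesis every nonzero linear combination $\sum d_\ell f_\ell'$ is continuous with only finitely many zeros on $J_r$. Then I would choose a summable family $\{\epsilon_{\bar{j}}\}_{\bar{j}\in\mathbb{N}^k}$ of positive reals with $\sum_{\bar{j}\in\mathbb{N}^k}\epsilon_{\bar{j}}<\epsilon/3$ (for instance $\epsilon_{\bar{j}}=\epsilon\cdot 2^{-k-|\bar{j}|}$). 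For each $\bar{j}=(j_1,\dots,j_k)$ the coefficient vector $(\alpha_{j_1}^{(1)},\dots,\alpha_{j_k}^{(k)})$ is nonzero since $\exp(\alpha)\neq1$ forces each entry to be nonzero. Applying Lemma \ref{lem:sing spectrum} to each $J_r$ with measure loss $\epsilon_{\bar{j}}/m$ produces a constant $\delta_{\bar{j},r}>0$ such that the set of $s\in J_r$ on which $|\prod_\ell \exp(f_\ell(s)t_i\alpha_{j_\ell}^{(\ell)})-1|>\delta_{\bar{j},r}$ has liminf measure at least $\lambda(J_r)-\epsilon_{\bar{j}}/m$. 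Define $\delta_{\bar{j}}:=\min_{1\le r\le m}\delta_{\bar{j},r}$; crucially this depends only on $\bar{j}$, not on $n$.

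Summing over $r$ yields that the set $S_{\bar{j},i}\subset[a,b]\setminus V$ on which the $\delta_{\bar{j}}$-inequality holds satisfies $\liminf_i \lambda(S_{\bar{j},i})\ge (b-a)-\lambda(V)-\epsilon_{\bar{j}}$. For fixed $n$, the desired set contains $\bigcap_{\bar{j}\in\{1,\dots,n\}^k} S_{\bar{j},i}$, and by the elementary bound
\[
\lambda\Bigl(\bigcap_{\bar{j}} S_{\bar{j},i}\Bigr)\ge \lambda([a,b]\setminus V)-\sum_{\bar{j}\in\{1,\dots,n\}^k}\bigl(\lambda([a,b]\setminus V)-\lambda(S_{\bar{j},i})\bigr)
\]
combined with $\liminf(X_i-Y_i)\ge \liminf X_i-\limsup Y_i$, one obtains
\[
\liminf_i \lambda\Bigl(\bigcap_{\bar{j}\in\{1,\dots,n\}^k} S_{\bar{j},i}\Bigr)\ge (b-a)-\lambda(V)-\sum_{\bar{j}\in\mathbb{N}^k}\epsilon_{\bar{j}}> (b-a)-\epsilon,
\]
which is the claimed estimate. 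The chief technical point to check is that the single $\delta_{\bar{j}}$ works uniformly across all $n$; this is exactly what the summability of $\{\epsilon_{\bar{j}}\}$ and the definition of $\delta_{\bar{j}}$ as a minimum over the finite collection $\{J_r\}$ buys us, so no extra difficulty remains once the excision of $P$ is in place.
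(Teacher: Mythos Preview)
Your argument is correct and follows essentially the same route as the paper: pick a summable family $\{\epsilon_{\bar j}\}$ with total mass at most $\epsilon$, apply Lemma~\ref{lem:sing spectrum} to each multi-index $\bar j$ to produce $\delta_{\bar j}$, and union-bound. The paper simply says ``it suffices to prove this on a connected component of $[a,b]\setminus P$'' and then carries out exactly this $\epsilon_{\bar j}$/union-bound step; your version is a bit more careful in that you excise a small neighborhood of $P$ (so that the $f_\ell'$ are genuinely bounded on each closed piece $J_r$) and then take $\delta_{\bar j}=\min_r\delta_{\bar j,r}$, but this is a cosmetic refinement of the same idea rather than a different method.
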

It suffices to prove this on a connected component of $[a,b]\setminus P$. 
Choose $\{\epsilon_{\bar{j}}\}_{\bar{j}\in \mathbb{N}^k}$ so that each one is positive and the sum of them all is at most $\epsilon$. For each such $\epsilon_{j_1,...,j_k}$ choose $\delta_{j_1,...,j_k}$ by the previous lemma. The corollary follows.

%\marginpar{MAYBE DELETE THIS AND THE COROLLARY}\begin{defin}Given two flow $F_1,F_2$ we say that $F_2$ is \emph{eigen-free} to $F_1$ if there exists a strong partial rigidity sequence for $F_2$, $t_1,...$,  so that $\underset{j \to \infty}{\liminf} \, |\alpha^{t_j}-1|>0$ for every $\alpha$ an eigenvalue of $F_1$  other than 1.\marginpar{This is a horrible name} %not corresponding to a constant function
%\end{defin}
%\begin{cor} Let $F_1,..,F_k, F_{k+1}$ be flows and $F_{k+1}$ have a $c$-strong partial rigidity sequence.   Let $f_1,....,f_k:[a,b] \to \mathbb{R}$ 
%have that for any $c_1,..,c_k$ the function $\sum_{j=1}^kc_jf'_j$ is continuous and takes the value zero on an isolated set. %Let $t_1,..$ be an unbounded strong partial rigidity sequence for $F_2$. 
%Then
%   $$\lambda(\{s \in [a,b]: F_{k+1}^{t} \text{ is eigen-free to }F_1^{f_1(s)t}\times...\times F_{k}^{f_k(s)t}\})>(b-a)-\epsilon.$$
%\end{cor}
%\begin{defin} Let $A \subset \mathbb{R}$. The \emph{density of $A$} is $\underset{N \to \infty}{\liminf}\, \frac{\lambda(A \cap [-N,N])}{2N}$. 
%\end{defin}
\begin{lem} Let $A \subset \mathbb{R}$ have density 1, $f:[a,b]\to \mathbb{R}$ and $f'$ be continuous and takes the value zero on a  finite set. Let $\{t_i\}_{i=1}^{\infty}$ be a sequence going to infinity. Then $$\underset{i \to \infty}{\liminf}\, \lambda(\{s\in [a,b]:t_i \in f(s)A\})=b-a.$$
\end{lem}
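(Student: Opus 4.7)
The plan is to make the substitution $u = t_i/f(s)$ on pieces where $f$ is monotone and bounded away from $0$, and then transfer the density $1$ property of $A$ from the image interval $g_i(J) \subset \mathbb{R}$ back to $J \subset [a,b]$ via Lemma \ref{lem:deriv to measure2}.

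First, observe that $t_i \in f(s)A$ is equivalent to $t_i/f(s) \in A$ whenever $f(s) \neq 0$. Since $f'$ is continuous with finitely many zeros, $f$ is strictly monotone between consecutive zeros of $f'$, so $f$ itself has only finitely many zeros on $[a,b]$. Let $P \subset [a,b]$ denote the finite set on which $f$ or $f'$ vanishes. Given $\epsilon > 0$, choose a neighborhood $U_\eta = \bigcup_{p\in P}(p-\eta,p+\eta)$ of $P$ with $\lambda(U_\eta)< \epsilon$; on the complement $[a,b]\setminus U_\eta$ both $|f|$ and $|f'|$ are bounded above and below by positive constants $c \le C$, and $[a,b]\setminus U_\eta$ is a finite union of closed subintervals $J_1,\dots,J_m$. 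It suffices to show, for each $J = J_\ell$, that $\lambda(\{s \in J : t_i/f(s) \in A\}) \to |J|$.

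Fix such a $J$ and define $g_i(s)=t_i/f(s)$. A direct computation gives $g_i'(s) = -t_i f'(s)/f(s)^2$, so the ratio $\max_J|g_i'|/\min_J|g_i'| \le C^3/c^3$ is bounded independently of $i$. Moreover $g_i(J)$ is an interval satisfying
\[
g_i(J) \subset [-t_i/c,\, t_i/c] \quad\text{and}\quad |g_i(J)| \ge \tfrac{c}{C^2}|J|\, t_i.
\]
Setting $N_i = t_i/c$, the density $1$ hypothesis yields $\lambda([-N_i,N_i]\setminus A)/N_i\to 0$, and since $|g_i(J)|$ is comparable to $N_i$ up to a constant depending only on $J$, we conclude $\lambda(g_i(J)\setminus A)/|g_i(J)| \to 0$. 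Applying Lemma \ref{lem:deriv to measure2} with $B = g_i(J)\setminus A$ and $B' = g_i(J)$ transfers this estimate back to $J$:
\[
\frac{\lambda(\{s \in J : t_i/f(s) \notin A\})}{|J|} \le \frac{C^3}{c^3} \cdot \frac{\lambda(g_i(J)\setminus A)}{|g_i(J)|} \longrightarrow 0.
\]
Summing over the finitely many $J_\ell$ and letting $\eta \to 0$ (equivalently, $\epsilon\to 0$) gives $\liminf_i\lambda(\{s\in[a,b]:t_i\in f(s)A\}) \ge b-a$, and the reverse inequality is automatic.

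The main (mild) obstacle is that $g_i(J)$ is not symmetric about the origin, so density $1$ of $A$ in symmetric intervals does not literally say anything about $g_i(J)$. One resolves this by using the containment $g_i(J) \subset [-N_i,N_i]$ to bound $\lambda(g_i(J)\setminus A) \le \lambda([-N_i,N_i]\setminus A) = o(N_i)$, and then dividing by $|g_i(J)| \ge (c/C^2)|J|\, t_i$, which differs from $N_i$ only by a constant depending on $J$ (not on $i$). Once this observation is made, the rest is bookkeeping of constants.
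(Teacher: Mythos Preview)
Your proof is correct and follows essentially the same strategy as the paper: excise a small neighborhood of the bad points, work on each monotone piece, and use Lemma~\ref{lem:deriv to measure2} to transfer the density-$1$ statement from the image interval back to $[a,b]$. You are in fact a bit more careful than the paper in two respects. First, you correctly read $t_i\in f(s)A$ as $t_i/f(s)\in A$ and work with $g_i(s)=t_i/f(s)$, whereas the paper's written proof passes to the image $t_i f(I_a)=[t_ie,t_ie']$ (a harmless slip, since $1/f$ has the same finitely-many-critical-points structure once one also removes the zeros of $f$, exactly as you do). Second, you explicitly address the asymmetry of $g_i(J)$ about the origin by bounding $\lambda(g_i(J)\setminus A)\le\lambda([-N_i,N_i]\setminus A)=o(N_i)$ with $N_i=t_i/c$ and then dividing by $|g_i(J)|\ge (c/C^2)|J|\,t_i$; the paper leaves this implicit. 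One cosmetic point: Lemma~\ref{lem:deriv to measure2} is stated for positive functions, so on pieces where $f<0$ you should apply it to $-g_i$; this changes nothing.
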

This is similar to the proof of Lemma \ref{lem:sing spectrum}.
\begin{proof}It suffices to show that for all $\epsilon>0$ and small enough there exist $i_0$ so that $ \lambda(\{s\in [a,b]:t_i \in f(s)A\})\geq b-a-\epsilon$ for all $i>i_0$. 
Let $\epsilon>0$ be given and be small enough. By our assumption on $f$ there exists $D>0$, $U$, a finite union of intervals,  so that $\frac 1 D<|f'(s)|<D$ for all $ s\in U$ and $\lambda(U)>b-a-\frac \epsilon 2$. Let $I_1,...,I_n$ be the disjoint intervals that make up $U$. For each $I_a$ there exists $e,e'$ so that $f(I_a)t_i=[t_ie,t_ie']$ for all $t_i$. 
Choose $i_a$ so that  for $i\geq i_a$ we have $\lambda([t_ie,t_ie']\cap A)>(1-\frac \epsilon {4D^2}) t_i|e-e'|$. As in the proof of Lemma \ref{lem:sing spectrum}, if $\epsilon>0$ is small enough we have 
$\lambda(\{s\in [a,b]: f(s)t_i\in A\})>b-a-\frac \epsilon 2$.
 Let $i_0=\max \{i_a\}_{a=1}^k$ and the lemma follows.
\end{proof}
\begin{cor}\label{cor:cont spectrum}  Let $A_1,...,A_{k} \subset \mathbb{R}$ have density 1, $P$ be a finite set, $f_1,..,f_{k}:[a,b]\setminus P \to \mathbb{R}$ have $f_1',...,f'_{k}$ are continuous and take the value zero on a finite set. Let $\{t_i\}_{i=1}^{\infty}$ be a sequence going to infinity. Then $$\underset{i \to \infty}{\liminf}\, \lambda(\{s\in [a,b]:t_i \in f_1(s)A_1\cap... \cap f_{k}(s)A_{k}\})=b-a.$$
\end{cor}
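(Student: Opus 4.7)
The plan is to deduce Corollary \ref{cor:cont spectrum} from the preceding (single-index) lemma by a simple union bound on complements. For each $j \in \{1,\dots,k\}$ and each $i$, set
$$G_j^{(i)} = \{s \in [a,b] : t_i \in f_j(s) A_j\}, \qquad E_j^{(i)} = [a,b] \setminus G_j^{(i)},$$
so that the set whose measure appears in the corollary is exactly $[a,b] \setminus \bigcup_{j=1}^{k} E_j^{(i)}$. By subadditivity $\lambda\bigl(\bigcup_j E_j^{(i)}\bigr) \le \sum_j \lambda(E_j^{(i)})$, so it suffices to prove that $\limsup_{i \to \infty}\lambda(E_j^{(i)}) = 0$ for each fixed $j$.

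For a fixed $j$ this is essentially the statement of the preceding lemma. The only subtlety is the domain: the preceding lemma was stated for $f:[a,b] \to \mathbb{R}$, whereas here $f_j$ is defined only on $[a,b]\setminus P$. To handle this, fix $\eta > 0$. Since $P$ is finite and the zero set of $f_j'$ is finite, one can choose a finite union $K_\eta$ of disjoint closed subintervals of $[a,b]\setminus P$, each avoiding the zero set of $f_j'$, such that $\lambda(K_\eta) > b - a - \eta$ and $f_j'$ is bounded above and away from $0$ on each component of $K_\eta$. Now run the proof of the preceding lemma verbatim on each component of $K_\eta$: the key estimate there controls $\lambda(\{s \in I_a : t_i \in f_j(s)A_j\})$ using only the upper and lower bounds on $|f_j'|$ on $I_a$ together with Lemma \ref{lem:deriv to measure2} and the density-$1$ hypothesis on $A_j$. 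This yields $\liminf_i \lambda(G_j^{(i)} \cap K_\eta) = \lambda(K_\eta)$, and hence $\limsup_i \lambda(E_j^{(i)}) \le \eta$.

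Combining these $k$ estimates gives $\limsup_i \lambda\bigl(\bigcup_j E_j^{(i)}\bigr) \le k\eta$, and letting $\eta \to 0$ yields $\liminf_i \lambda\bigl([a,b] \setminus \bigcup_j E_j^{(i)}\bigr) = b - a$, which is the desired conclusion. No genuinely new ingredient is required beyond the preceding lemma; the only (very mild) obstacle is the bookkeeping needed to excise small neighborhoods of $P$ and of the finite zero sets of the $f_j'$ so that the derivative bounds hold on each component of $K_\eta$, and finiteness of these exceptional sets makes this cost arbitrarily small.
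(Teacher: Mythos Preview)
Your proposal is correct and is exactly the argument the paper has in mind: the corollary is stated without proof as an immediate consequence of the preceding single-index lemma, and your union bound on the complements $E_j^{(i)}$ together with the excision of small neighborhoods of $P$ and the finite zero sets of the $f_j'$ is precisely how one fills in the details. There is no difference in approach.
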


%\begin{cor}\label{cor:cont spectrum}  Let $A_1,...,A_n \subset \mathbb{R}$ have density 1, $f_1,..,f_n:[a,b]\to \mathbb{R}$ have $f_1',...,f'_n$ are continuous and take the value zero on an isolated set. Let $t_1,..$ be a sequence going to infinity. Then $\underset{i \to \infty}{\liminf}\, \lambda(\{s\in [a,b]:t_i \in f_1(s)A_1\cap... \cap f_n(s)(A_n)\})=b-a.$
%\end{cor}
Let $\Phi_s:\mathbb{R}\to (-\frac \pi 2,\frac \pi 2)$ by $\Phi_s(a)=\theta$ where the $\bar{N}AN$ decomposition of $r_\theta\hat{h}_s=\hat{h}_bg_th_a$ for some $b,t$. 
Note by Section \ref{sec:matrix decomps} $\Phi_s$ is well defined. Indeed we showed that the $N$ coordinate in this decomposition of $r_\theta\hat{h}_s$ is $\frac{\tan(\theta)}{1+s\tan(\theta)}$ and $\tan:(-\frac \pi 2,\frac \pi 2) \to \mathbb{R}$ is a bijection. Indeed $\tan$ is injective on $(-\frac \pi 2,\frac \pi 2)$ and if $\begin{pmatrix}1&s\\0&1\end{pmatrix}\begin{pmatrix}1&0\\t&1\end{pmatrix}$ has the same $N$ coordinate as $\begin{pmatrix}1&s'\\0&1\end{pmatrix}\begin{pmatrix}1&0\\t&1\end{pmatrix}$ then $s=s'$.

The next proposition is a main step in the proof. It needs some notation. If constants $c_1,...,c_k,c_{k+1}$ are understood let $\zeta_i(s)= \frac 1 {\sqrt{(1-c_is)^2+c_i^2}}.$ From Section \ref{sec:matrix decomps} and Lemma \ref{lem:NAN spectrum} we have $F^{t}_{r_{\Phi_s(c_i)}\hat{h}_s\omega}$ is isomorphic to $F^{\zeta_i(s)t}_{h_{c_i}\omega}.$ Indeed, from Section \ref{sec:matrix decomps} if  $r_\theta \hat{h}_s=\hat{h}_bg_th_{c_i}$ then $t=\log( \frac 1 {\sqrt{(1-c_is)^2+c_i^2}})$.

\begin{prop}\label{prop:horocycle disjoint}  If $\lambda^k$ almost every $(c_1,...,c_k)$ we have that for almost every $s$ the flow 
 $F_{r_{\Phi_s(c_1)}\hat{h}_s\omega} \times ...\times F_{r_{\Phi_s(c_k)}\hat{h}_s\omega}$ is $\lambda_{\hat{h}_s\omega}^k$ ergodic then for  
 $\lambda^{k+1}$ almost every $(c_1,...,c_k,c_{k+1})$ we have that for almost every $s$ the flow 
 $F_{r_{\Phi_s(c_1)}\hat{h}_s\omega} \times ...\times F_{r_{\Phi_s(c_k)}\hat{h}_s\omega}$ is disjoint from $F_{r_{\Phi_s(c_{k+1})}\hat{h}_s\omega}$ as systems preserving $\lambda_{\hat{h}_s\omega}^k$ and $\lambda_{\hat{h}_s\omega}$ respectively.
\end{prop}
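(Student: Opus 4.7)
The plan is to apply the disjointness criterion Proposition \ref{prop:disjoint criterion} with $F_1 := F_{r_{\Phi_s(c_{k+1})}\hat{h}_s\omega}$ supplying the partial rigidity sequence and $F_2 := F_{r_{\Phi_s(c_1)}\hat{h}_s\omega} \times \cdots \times F_{r_{\Phi_s(c_k)}\hat{h}_s\omega}$ playing the role of the flow for which condition $(*)$ must be verified. Ergodicity of $F_2$ is supplied by the hypothesis of the proposition for $\lambda^k$-a.e.\ $(c_1,\dots,c_k)$ and a.e.\ $s$. First pick a $c$-partial rigidity sequence $\{t_n\}$ for the vertical flow on $h_{c_{k+1}}\omega$ (available for almost every $c_{k+1}$, since the vertical flow on the typical translation surface admits such a sequence); by Lemma \ref{lem:NAN spectrum} the rescaled sequence $\{\zeta_{k+1}(s)^{-1}t_n\}$ is then a $c$-partial rigidity sequence for $F_1$.

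To verify condition (1) of $(*)$, let $\exp(\alpha_j^{(i)})$ denote the eigenvalues distinct from $1$ of the unit-time vertical flow on $h_{c_i}\omega$. Combining Lemmas \ref{lem:NAN spectrum} and \ref{lem:product spectrum}, the eigenvalues of $F_2^t$ take the form $\exp\bigl(t\sum_{i=1}^{k}\zeta_i(s)\alpha_{j_i}^{(i)}\bigr)$. Setting $f_i(s) := \zeta_i(s)/\zeta_{k+1}(s)$, condition (1) requires a uniform-in-$n$ lower bound on
$$\Bigl|\exp\bigl(t_n\bigl(f_1(s)\alpha_{j_1}^{(1)}+\cdots+f_k(s)\alpha_{j_k}^{(k)}\bigr)\bigr)-1\Bigr|$$
for every multi-index $\bar j$. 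Lemma \ref{lem:lin indep} shows that for $\lambda^{k+1}$-a.e.\ $(c_1,\dots,c_{k+1})$ the functions $\{f_i\}$ satisfy the linear-independence-of-derivatives hypothesis of Corollary \ref{cor:sing spectrum}. Applying that corollary on compact intervals exhausting $\mathbb{R}$ with $\epsilon\to 0$ yields a full-measure set of $s$ on which (1) holds with uniform constants $\delta_{\bar j}$.

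For condition (2), take density-$1$ sets $A_i\subset\mathbb{R}$ along which $U_{F_{h_{c_i}\omega}}$ tends to $0$ in the weak operator topology on the continuous spectrum. Lemmas \ref{lem:NAN spectrum} and \ref{lem:product spectrum} yield that $\bigcap_{i=1}^{k}\zeta_i(s)^{-1}A_i$ is the corresponding set for $F_2$, so the requirement reduces to $t_n\in\bigcap_{i=1}^{k}(\zeta_{k+1}(s)/\zeta_i(s))A_i$. Corollary \ref{cor:cont spectrum}, together with a short measure-theoretic argument exploiting that the exceptional set for each $n$ has measure tending to $0$, produces, for a.e.\ $s$, an infinite subsequence of $n$'s lying in this intersection. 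Passing to that (possibly $s$-dependent) subsequence preserves the partial rigidity of $F_1$ and condition (1), while realizing (2); Proposition \ref{prop:disjoint criterion} then delivers the disjointness. A Fubini argument in $(c_1,\dots,c_{k+1},s)$ converts ``a.e.\ $s$ for each generic $(c_1,\dots,c_{k+1})$'' into the stated conclusion. The main technical obstacle is verifying (1): the linear independence furnished by Lemma \ref{lem:lin indep} is precisely what rules out accidental rational-like alignments among the scaled eigenvalues at almost every $s$.
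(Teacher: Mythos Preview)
Your proposal is correct and follows essentially the same route as the paper's proof. You identify the same roles for $F_1$ and $F_2$, invoke Theorem~\ref{thm:part rig flow} for the partial rigidity sequence, use Lemma~\ref{lem:NAN spectrum} to pass to the rescaled sequence $\{t_n/\zeta_{k+1}(s)\}$, and verify the two halves of condition $(*)$ via Corollaries~\ref{cor:sing spectrum} and~\ref{cor:cont spectrum} applied to the ratios $\zeta_i/\zeta_{k+1}$ and $\zeta_{k+1}/\zeta_i$ respectively, with Lemma~\ref{lem:lin indep} supplying the derivative hypothesis---exactly as the paper does (the paper labels these conditions (2) and (1), reversing your numbering, but the content is identical).

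Two minor remarks. First, your sentence ``yields a full-measure set of $s$ on which (1) holds with uniform constants $\delta_{\bar j}$'' slightly overstates what Corollary~\ref{cor:sing spectrum} gives: the constants $\delta_{\bar j}$ depend on $\epsilon$, so one should rather argue (as the paper does) that for each $\epsilon$ disjointness holds off a set of measure $O(\epsilon)$, and then let $\epsilon\to 0$. Second, the Fubini step you mention at the end is not part of the paper's proof of this proposition; the quantifiers are already in the stated order, and the paper defers the Fubini swap to the proof of Theorem~\ref{thm:typ disjoint}. Neither point affects correctness.
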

The proof uses the following result which is known to experts and for that reason the proof is in Appendix A.
\begin{thm}\label{thm:part rig flow}For every  translation surface $\omega$, there exists  $a>0$ so that for almost every $\theta$ we have $F_{r_\theta\omega}$ has a $\frac a 2$-partial rigidity sequence.
\end{thm}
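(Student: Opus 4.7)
The plan is to verify the hypotheses of the disjointness criterion Proposition~\ref{prop:disjoint criterion} with $F_1=F_{r_{\Phi_s(c_{k+1})}\hat h_s\omega}$ and $F_2=F_{r_{\Phi_s(c_1)}\hat h_s\omega}\times\cdots\times F_{r_{\Phi_s(c_k)}\hat h_s\omega}$. Ergodicity of $F_2$ is the induction hypothesis, and ergodicity of $F_1$ follows for a.e.\ $(s,c_{k+1})$ from Kerckhoff--Masur--Smillie applied to $\hat h_s\omega$. To obtain a partial-rigidity sequence for $F_1$, I would apply Theorem~\ref{thm:part rig flow} (after identifying $F_{h_{c_{k+1}}\omega}$ with a directional flow on $\omega$ via a smooth change of variables) to conclude that for a.e.\ $c_{k+1}$ the flow $F_{h_{c_{k+1}}\omega}$ admits a $c$-partial rigidity sequence $\{t_i\}_{i=1}^\infty$; Lemma~\ref{lem:NAN spectrum} then shows that $\{T_i(s):=\zeta_{k+1}(s)^{-1}t_i\}$ is a $c$-partial rigidity sequence for $F_1$.

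The main task is to check condition~(*) along (a subsequence of) $\{T_i(s)\}$ for $F_2$. By Lemma~\ref{lem:NAN spectrum} the eigenvalues of $F^t_{r_{\Phi_s(c_j)}\hat h_s\omega}$ are $\{\exp(t\zeta_j(s)\alpha^{(j)}_m)\}$, where $\{\exp(\alpha^{(j)}_m)\}$ are the eigenvalues of $F_{h_{c_j}\omega}$, and Lemma~\ref{lem:product spectrum} makes the eigenvalues of $F_2^t$ equal to $\exp\bigl(t\sum_{j=1}^k\zeta_j(s)\alpha^{(j)}_{m_j}\bigr)$. Thus condition (1) of~(*) reduces to
\[
\inf_i\Bigl|\exp\Bigl(t_i\sum_{j=1}^k\tfrac{\zeta_j(s)}{\zeta_{k+1}(s)}\alpha^{(j)}_{m_j}\Bigr)-1\Bigr|>0
\]
for every tuple $(m_1,\dots,m_k)$, which is exactly what Corollary~\ref{cor:sing spectrum} delivers with $f_j(s)=\zeta_j(s)/\zeta_{k+1}(s)$; the required linear-independence hypothesis on the derivatives of these $f_j$ is provided on a full-measure set of $(c_1,\dots,c_{k+1})$ by Lemma~\ref{lem:lin indep}. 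Similarly, Lemma~\ref{lem:NAN spectrum} rescales the density-one weak-operator zero set for the continuous spectrum of $F_{h_{c_j}\omega}$ by $\zeta_j(s)^{-1}$, so condition (2) of~(*) reduces to $t_i\in\bigcap_{j=1}^k(\zeta_{k+1}(s)/\zeta_j(s))A_j$ for $A_j$ such a density-one zero set, which is precisely Corollary~\ref{cor:cont spectrum} applied with $f_j(s)=\zeta_{k+1}(s)/\zeta_j(s)$ (again with Lemma~\ref{lem:lin indep} supplying the hypothesis).

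Both corollaries provide $\liminf_i$ lower bounds on the $s$-measure of truncated ``good'' sets, whereas condition~(*) requires the conclusion pointwise in $s$ along a single subsequence of $\{t_i\}$. I would bridge this gap with a diagonal Borel--Cantelli argument: cover $\R$ in $s$ by countably many compact intervals avoiding the finite singular set $P$, take a summable sequence $\epsilon_m\downarrow 0$, and for each $m$ and each finite window of eigenvalue tuples $(m_1,\dots,m_k)$ invoke Corollaries~\ref{cor:sing spectrum} and~\ref{cor:cont spectrum} to extract a further subsequence along which the bad-set $s$-measures are summable. A diagonal extraction across the countably many windows and intervals yields one subsequence $\{t_{i_n}\}$ along which Borel--Cantelli gives, for a.e.\ $s$, both clauses of~(*) for $\{T_{i_n}(s)\}$. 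Any subsequence of a partial-rigidity sequence is again one (use the same $S_i$'s), so the hypotheses of Proposition~\ref{prop:disjoint criterion} are verified and the desired disjointness follows.

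The hard part will be executing the diagonal Borel--Cantelli extraction cleanly. The algebraic inputs (the $\bar{N}AN$ formulas for $\zeta_j$, Lemmas~\ref{lem:NAN spectrum}--\ref{lem:product spectrum}, and the singularity analysis of Lemma~\ref{lem:lin indep}) slot into Corollaries~\ref{cor:sing spectrum} and~\ref{cor:cont spectrum} without incident; the real bookkeeping is synthesizing ``for a.e.\ $s$ some subsequence of the rigidity times witnesses~(*) for all countably many eigenvalue tuples and for all the density-one sets $A_j$ simultaneously'' from the finite-window liminf statements the corollaries actually provide.
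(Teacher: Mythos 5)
Your proposal does not prove the stated theorem at all: what you have sketched is an argument for Proposition~\ref{prop:horocycle disjoint} (the inductive disjointness step), and in the course of it you explicitly \emph{invoke} Theorem~\ref{thm:part rig flow} (``I would apply Theorem~\ref{thm:part rig flow} \dots to conclude that for a.e.\ $c_{k+1}$ the flow $F_{h_{c_{k+1}}\omega}$ admits a $c$-partial rigidity sequence''). Since Theorem~\ref{thm:part rig flow} is precisely the statement you were asked to prove, the argument is circular as a proof of that statement, and none of the machinery you deploy (the $\bar{N}AN$ decomposition, condition~(*), Corollaries~\ref{cor:sing spectrum} and~\ref{cor:cont spectrum}, the diagonal Borel--Cantelli extraction) bears on it.

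What the theorem actually requires is a construction of partial rigidity sequences for the directional flows themselves, and the paper's route is geometric and Diophantine rather than spectral. The two ingredients are: (i) a quantitative return estimate --- if $\omega$ has a cylinder of period $T$ in direction $\phi$ with $|\theta-\phi|<\epsilon/T^2$, then every point whose $F_\theta$-orbit stays in the cylinder up to time $T$ satisfies $d(F_\theta^T x,x)<\epsilon/T$ (Lemma~\ref{lem:key}, using convexity of cylinders so the straight-line comparison between $F_\theta^t x$ and $F_\phi^t x$ meets no singularity); and (ii) the existence, for a.e.\ $\theta$ and arbitrarily large $T$, of cylinders of area bounded below by a constant $c>0$ depending only on the genus whose direction approximates $\theta$ to order $\epsilon/T^2$. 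The latter is the content of the cited theorem of Marchese--Trevi\~no--Weil on planar resonant sets, applied with a suitable non-increasing function $f$ satisfying $\int_C^\infty r f(r)\,dr=\infty$. Taking the sets $S_{T_i}$ to be the points of the approximating cylinders that remain inside for time $T_i$ (all but measure $\delta$ of the cylinder) produces a $\frac{c}{2}$-partial rigidity sequence. If you want to salvage your write-up, you must supply this cylinder-approximation argument (or some substitute source of asymptotic almost-periodicity for a.e.\ direction); the spectral/disjointness framework cannot generate partial rigidity on its own.
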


\begin{proof}[Proof of Proposition \ref{prop:horocycle disjoint}] We restrict to $s$ in a fixed bounded interval. It suffices to show that for  all but a set of $s$ of measure $\epsilon$ we have that 
 $$F_{h_{c_1}\omega }^{\zeta_1(s)t}\times...\times F_{h_{c_k}\omega }^{\zeta_k(s)t}$$
 is ergodic and has a set satisfying (*) that contains an $c>0$-partial rigidity sequence for 
  $F^{\zeta_{k+1}(s)t}_{h_{c_{k+1}}\omega }$.  By the assumption of the proposition we may assume the flow is ergodic. 
 If $\{t_i\}_{i=1}^\infty$ is a $c$-partial rigidity sequence for $F^t_{h_{c_{k+1}}\omega }$ (which exists by Theorem \ref{thm:part rig flow}), $A_i$ are sets
  along which $U_{F_{h_{c_i}\omega }}$ converges to zero  on $\mathcal{H}_c$  in the weak operator topology 
  and $\{\alpha_1^{(i)}\}_{i=1}^\infty$ are the non-constant eigenvalues of $F^t_{h_{c_i}\omega }$ then it suffices to show that there exist $\{\delta_{\bar{j}}>0\}_{\bar{j}\in \mathbb{N}^k}$ so that for any $n \in \mathbb{N}$ for all but a set of $s$ of measure $\epsilon$ we have that there exist an infinite number of $t_i$ simultaneously satisfying:
 \begin{enumerate}
 \item $\frac{t_i}{\zeta_{k+1}(s)}\in \cap_{j=1}^k\frac 1 {\zeta_j(s)}A_i.$
 \item For each $j_1,...,j_k\in \{1,...,n\}^k$ we have 
$$
 |exp(\zeta_1(s)\frac{t_i}{\zeta_{k+1}(s)}\alpha_{j_1}^{(1)})...
 exp(\zeta_k(s)\frac{t_i}{\zeta_{k+1}(s)}\alpha_{j_k}^{(k)})-1|>\delta_{j_1,...,j_k}.
$$
 \end{enumerate}
 We now justify (1). Consider the functions 
  $$f_i(s)=\frac{\zeta_{k+1}(s)}{\zeta_i(s)}.$$
  Observe that $t \in f_i(s)A_i$ iff 
  $$\frac{t}{\zeta_{k+1}(s)}\in \frac 1 {\zeta_i(s)}A_i.$$  By Lemma \ref{lem:lin indep}, for almost every choice of $c_i$, the functions $f_i$ satisfy the assumptions of Corollary \ref{cor:cont spectrum} and so it satisfies  condition 1 for each large enough $t_i$ on all but a set of $s$ of measure $\epsilon$. 
  
  We now justify (2). Consider the functions 
  $$\hat{f}_i(s)=\frac 1 {f_i(s)}.$$
  Observe that  
  $$ |exp(\zeta_1(s) \frac{t_i}{\zeta_{k+1}(s)}\alpha_{j_1}^{(1)})...
 exp(\zeta_k(s)\frac{t_i}{\zeta_{k+1}(s)}\alpha_{j_k}^{(k)})-1|>\delta_{j_1,...,j_k}$$
 iff 
$$|exp(\hat{f}_1(s)t_i\alpha_{j_1}^{(1)})...exp(\hat{f}_k(s)t_i\alpha_{j_k}^{(k)}) -1|>\delta_{j_1,....,j_k}.$$ Since the functions $\hat{f}_i$ satisfy the assumptions of  Corollary \ref{cor:sing spectrum} for almost every $(c_1,..,c_k)$ (by Lemma \ref{lem:lin indep}), by Corollary \ref{cor:sing spectrum} we have that for any fixed $n$ condition 2 is satisfied for all large enough $t_i$ \footnote{Where the largeness of $t_i$ can depend on $n$} on all but a set of $s$ of measure $\epsilon$. Putting these together we have that for all but a set of $s$ of measure $2\epsilon$ for any $n$ we satisfy conditions 1 and 2 simultaneously for arbitrarily large $t_i$.
\end{proof}
%To prove Proposition \ref{prop:crit holds} we need to justify that we can apply Fubini's Theorem, and so we state the next well known lemma whose proof is straightforward:
To prove Theorem \ref{thm:typ disjoint} we will need the following two known and straightforward results:
\begin{lem}\label{lem:disjoint ue} If $(F_1,\lambda_1),(F_2,\lambda_2)$ are uniquely ergodic and disjoint then $F_1\times F_2$ is uniquely ergodic.
\end{lem}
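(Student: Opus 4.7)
The plan is to show that any $F_1 \times F_2$-invariant probability measure on $X_1 \times X_2$ must be a joining of $(F_1, \lambda_1)$ and $(F_2, \lambda_2)$, and then invoke disjointness to conclude that this joining is forced to be the product measure. This gives uniqueness of the invariant measure, which is the definition of unique ergodicity for $F_1 \times F_2$.

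First, let $\mu$ be any $F_1 \times F_2$-invariant Borel probability measure on $X_1 \times X_2$, and denote by $\pi_i : X_1 \times X_2 \to X_i$ the coordinate projections. I would observe that $\pi_i \circ (F_1 \times F_2)^t = F_i^t \circ \pi_i$, so the pushforward $(\pi_i)_*\mu$ is an $F_i$-invariant Borel probability measure on $X_i$. By the unique ergodicity hypothesis for each factor, $(\pi_i)_*\mu = \lambda_i$ for $i = 1, 2$. Hence $\mu$ has marginals $\lambda_1$ and $\lambda_2$, i.e., $\mu$ is a joining of the two systems in the sense recalled in the Joinings subsection.

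Next, I would apply the disjointness hypothesis: since $\lambda_1 \times \lambda_2$ is the unique joining, $\mu = \lambda_1 \times \lambda_2$. As $\mu$ was an arbitrary $F_1 \times F_2$-invariant probability measure, this shows that $\lambda_1 \times \lambda_2$ is the unique invariant probability measure for $F_1 \times F_2$, which is exactly unique ergodicity.

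There is no real obstacle here; the argument is entirely formal, boiling down to the two observations that (a) projections of invariant measures are invariant on the factors, and (b) disjointness is precisely the statement that the only measure with prescribed marginals is the product. The only mild care needed is verifying that the pushforwards are probability measures (immediate, since $\pi_i$ is continuous and $\mu$ is a probability measure) and that invariance is preserved under $\pi_i$ (immediate from the commutation relation above).
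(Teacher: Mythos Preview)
Your argument is correct and is exactly the standard proof the paper has in mind; indeed, the paper does not supply a proof at all, simply labeling this as one of ``two known and straightforward results.'' Your two-step reduction---pushforwards of an invariant measure are invariant on the factors, hence equal to $\lambda_1,\lambda_2$ by unique ergodicity, so any invariant measure is a joining and thus the product by disjointness---is precisely the intended justification.
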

\noindent 
\begin{lem}\label{lem:ue borel} Uniquely ergodic is Borel and therefore measurable.\footnote{It suffices to show that for a countable number of metric balls that generate the topology, the Birkhoff averages converge uniformly. For each such ball this is a Borel condition.}
\end{lem}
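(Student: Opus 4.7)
The plan is to express unique ergodicity as a countable Boolean combination of conditions that each depend on the parameter in a continuous way, which would immediately make the set of parameters giving unique ergodicity Borel.

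First, I would use the standard characterization: for a continuous flow $\phi^t$ on a compact metric space $Y$, unique ergodicity is equivalent to the following Cauchy-type condition. For every $f \in C(Y)$ and every $\epsilon > 0$, there exists $T_0$ so that for all $T, S \geq T_0$ and all $y, y' \in Y$,
\[
\left|\frac{1}{T}\int_0^T f(\phi^t y)\,dt - \frac{1}{S}\int_0^S f(\phi^t y')\,dt\right| < \epsilon.
\]
(The equivalence is the standard: unique ergodicity is equivalent to uniform convergence in $y$ of Birkhoff averages of continuous functions to a constant; the Cauchy criterion packages this without naming the limit.)

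Second, I would fix a countable dense subset $\{f_j\} \subset C(Y)$ in the sup norm and a countable dense subset $\{y_i\} \subset Y$. Since each $f_j$ is uniformly continuous and $\phi^t$ is continuous in $y$, the Birkhoff average $\frac{1}{T}\int_0^T f_j(\phi^t y)\,dt$ is continuous in $y$ for fixed $T$, so the supremum over $y,y' \in Y$ equals the supremum over the countable set $\{y_i\}$. A triangle-inequality argument then shows that the Cauchy condition reduces to: for every index $j$ and every rational $\epsilon > 0$, there exists a rational $T_0$ such that for all rational $T, S \geq T_0$ and all indices $i, i'$, the displayed inequality holds. This exhibits unique ergodicity as a countable intersection of countable unions of countable intersections of closed conditions involving Birkhoff averages at fixed $(f_j, y_i, T)$.

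Third, in the setting of Theorem~\ref{thm:typ disjoint} the phase space $Y = X^k$ is fixed and the flow $\phi^t = F_{\theta_1} \times \cdots \times F_{\theta_k}$ depends on the parameter $\bar{\theta}$. At fixed $(f_j, y_i, T)$, the Birkhoff average is continuous (indeed smooth) in $\bar{\theta}$ outside a measure-zero set of singular directions, so each elementary condition above carves out a Borel set in parameter space. Their countable Boolean combination is then Borel. The main ``obstacle'' is purely bookkeeping: all the substance is the reduction to a countable dense family of test functions and base points, which is classical; no surface-specific input is needed. The footnoted alternative using indicator functions of small balls is a slight variant where one uses that for a uniquely ergodic flow, Birkhoff averages of indicators of sets whose boundary has measure zero converge uniformly — the same Borel bookkeeping then applies.
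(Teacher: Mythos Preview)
Your argument is correct and follows the same standard route the paper indicates in its footnote: express unique ergodicity as a countable Boolean combination of elementary conditions on Birkhoff averages, each of which depends Borel-measurably on the parameter. The only cosmetic difference is that you use a countable dense family of continuous test functions and base points together with a Cauchy criterion, whereas the footnote suggests indicators of a countable base of metric balls; you yourself note this variant, and in the setting at hand (where the candidate invariant measure is Lebesgue, so ball boundaries are null) the two are interchangeable.
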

\begin{proof}[Proof of Theorem \ref{thm:typ disjoint}] We prove this by induction on $k$. 
The base case follows from Kerckhoff-Masur-Smillie \cite{KMS}. We now assume Theorem \ref{thm:typ disjoint} 
is true for $k$ and prove it for $k+1$ by the previous proposition and Fubini's Theorem. 
Since we are assuming that $F_{\Phi_s(c_1)\hat{h}_s\omega}\times...\times F_{\Phi_s(c_k)\hat{h}_s\omega}$ is ergodic, we may apply the previous proposition. Therefore, 
 for  $\lambda^{k+1}$ almost every $(c_1,...,c_{k},c_{k+1})$ we have that for almost every $s$, that $F_{\Phi_s(c_1)\hat{h}_s\omega}\times...\times F_{\Phi_s(c_k)\hat{h}_s\omega}$ is disjoint from $F_{\Phi _s(c_{k+1})\hat{h}_s\omega}$. 
  By Fubini's Theorem, which is justified by Lemma \ref{lem:ue borel}, by Proposition \ref{prop:horocycle disjoint}, we have that for almost every $s$ for almost every  $(c_1,...,c_k,c_{k+1})$ we have that $F_{\Phi_s(c_1)\hat{h}_s\omega}\times...\times F_{\Phi_s(c_k)\hat{h}_s\omega}$ is disjoint from $F_{\Phi _s(c_{k+1})\hat{h}_s\omega}$.  By Lemma \ref{lem:disjoint ue} we have $(F_{\Phi_s(c_1)\hat{h}_s\omega}\times...\times F_{\Phi_s(c_k)\hat{h}_s\omega}) \times F_{\Phi _s(c_{k+1})\hat{h}_s\omega}$ is uniquely ergodic. 
 Since $\Phi_s$ and its inverse are absolutely continuous we have the Theorem. \end{proof} 

%\begin{proof}[Proof of Theorem \ref{thm:crit holds}] It suffices to show that for almost every $(k+1)$-tuple $h_{c_1},...,h_{c_{k+1}}$
% for all but a set of $s$ of measure $\epsilon$ we have that 
% $$F^{\cos(\arctan(\frac{c_1}{1-c_1s}))+1+\frac{c_1}{1-c_1s}}\times...\times F^{\cos(\arctan(\frac{c_k}{1-c_ks}))+1+\frac{c_k}{1-c_ks}}$$
%  has a good sequence that contains a $e>0$-strong partial rigidity sequence for 
%  $F^{\cos(\arctan(\frac{c_{k+1}}{1-c_{k+1}s}))+1+\frac{c_{k+1}}{1-c_{k+1}s}}$.\footnote{Briefly, this is true for all $\ell\leq k$ then we have the condition for good for disjointness. This is an intersection of $k$ conditions and so it is true for $k\epsilon$ of our points} Consider the functions 
%  $$f_i(s)=\frac{\cos(\arctan(\frac{c_i}{1-c_is}))+1+\frac{c_i}{1-c_is}}
%  {\cos(\arctan(\frac{c_{k+1}}{1-c_{k+1}s}))+1+\frac{c_{k+1}}{1-c_{k+1}s}}$$ and observe that for almost every $(k+1)$-tuple they satisfy the %conditions of Lemma \ref{lem:sing spectrum} and Proposition \ref{prop:intersection}. By Proposition \ref{prop:intersection} Condition 1 of a good sequence is taken care of. By Lemma \ref{lem:sing spectrum} Condition 2 of a good sequence is taken care of. 
%\end{proof}

\appendix
\section{Proof of Theorem \ref{thm:part rig flow}}
%For completeness we prove that on every translation surface the flow in almost every direction has a strong partial rigidity sequence. This is known to experts.
This section proves Theorem \ref{thm:part rig flow} for completeness. This result  is known but we could not find in print.
\begin{prop}\label{prop:suff cyl}It suffices to show that for every surface $\omega$ there exists $c>0$ so that for every $\epsilon>0, N$, for almost every $\theta$ there exists a cylinder with area at least $c$, length $T>N$ and direction $\psi$ so that $|\psi-\theta|<\frac \epsilon {T^2}$. 
\end{prop}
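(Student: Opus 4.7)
My plan is to use each cylinder supplied by the hypothesis to manufacture one term of a partial rigidity sequence, by restricting to the interior of the cylinder so that points do not exit during the relevant return time.

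Fix $\omega$ and let $c>0$ be the constant given by the hypothesis of the proposition. For a.e. $\theta$, apply the hypothesis with sequences $\epsilon_n \to 0$ and $N_n \to \infty$ to obtain cylinders $C_n$ of area $\geq c$, core-curve length $T_n > N_n$, and direction $\psi_n$ satisfying $|\psi_n - \theta| < \epsilon_n/T_n^2$. I will show that $\{T_n\}$ is a $\tfrac{c}{2}$-partial rigidity sequence for $F_{r_\theta\omega}$, which gives the theorem with $a := c$.

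For each $n$, parametrize the interior of $C_n$ by $[0,T_n) \times [0,w_n]$, where the first coordinate is periodic and runs along $\psi_n$, the second coordinate is perpendicular, and $w_n T_n \geq c$, so $w_n \geq c/T_n$. Let $\delta_n := \epsilon_n/T_n$ and define $S_n$ to be the subcylinder $[0,T_n) \times [\delta_n, w_n - \delta_n]$. Then $\mu(S_n) \geq c - 2\epsilon_n > c/2$ for large $n$, which gives condition (1) of Definition \ref{def:part rig}. In these coordinates, the vector field generating $F_{r_\theta\omega}$ is the constant vector $(\cos(\theta-\psi_n), \sin(\theta-\psi_n))$, so for $p \in S_n$ and $t \in [0, T_n]$ the perpendicular coordinate of $F^t_{r_\theta\omega}(p)$ stays within $\delta_n + T_n|\sin(\theta-\psi_n)| \leq \delta_n + \epsilon_n/T_n = 2\delta_n$ of the starting coordinate, hence remains inside $[0, w_n]$. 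Thus the trajectory never meets a boundary saddle connection (in particular no singularity), and the flow simply translates by the displacement vector of length at most $T_n|\theta - \psi_n| < \epsilon_n/T_n$ relative to the $\psi_n$-direction flow that returns exactly. Taking periodicity along $\psi_n$ into account, $d(F^{T_n}_{r_\theta\omega}(p), p) \leq \epsilon_n/T_n + T_n(1-\cos(\theta-\psi_n)) = O(\epsilon_n/T_n)$ uniformly in $p \in S_n$, so $\int_{S_n} d(F^{T_n}_{r_\theta\omega}(p), p)\,d\mu \to 0$, giving condition (2). Condition (3) then follows automatically from the lemma after Definition \ref{def:part rig}.

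The only subtle point is the verification that trajectories from $S_n$ remain inside the cylinder throughout $[0, T_n]$; this is exactly where the $1/T^2$ angular tightness in the hypothesis is needed, since the perpendicular drift over one period is $T_n|\sin(\theta-\psi_n)|$ and must be balanced against the width $c/T_n$ of the cylinder. With the stated scaling, the two quantities are comparable up to the small factor $\epsilon_n$, which is the whole point of the estimate. No other serious obstacle arises.
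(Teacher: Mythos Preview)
Your argument is correct and is essentially the paper's own proof: you trim a margin from the cylinder so that the $\theta$-trajectory stays inside for time $T_n$, then bound $d(F^{T_n}_\theta p,p)$ by the angular deviation times $T_n$, which is $O(\epsilon_n/T_n)$ --- the paper isolates this last estimate as Lemma~\ref{lem:key} and defines $S_T$ as the set of points satisfying its hypothesis. One small slip: the perpendicular drift over $[0,T_n]$ is at most $T_n|\sin(\theta-\psi_n)|<\epsilon_n/T_n=\delta_n$, not $\delta_n+T_n|\sin(\theta-\psi_n)|$, and it is this tighter bound that makes your margin of width $\delta_n$ sufficient (with your stated $2\delta_n$ bound the trajectory could exit).
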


\begin{lem}\label{lem:key} Suppose $\omega$ has a cylinder $C$ in direction $\phi$ with  period $T\geq 1$. Let $\theta$ be a direction such that $|\theta-\phi|<  \epsilon/T^2$. If $F_{\theta}^t(x) $ is inside $C$ for all $0<t<T $ then 
$d(F^t_{\theta}(x),F^t _{\phi}(x))<\epsilon/T$. In particular, $d(F^T_\theta (x), x) < \epsilon/T$. 
\end{lem}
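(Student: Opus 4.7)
The plan is to work entirely inside the flat cylinder $C$, where both flows are locally just Euclidean translations in a fixed flat metric and no cone points intervene. First I would observe that $F^t_\phi(x)$ stays inside $C$ for all $t$: since $\phi$ is the cylinder direction and $x \in C$, this trajectory is the closed geodesic through $x$ in $C$. By hypothesis $F^t_\theta(x)$ also lies in $C$ for $0 < t < T$. So I may lift both trajectories, starting from the same preimage $\tilde{x}$, to the universal cover $\tilde{C}$ of $C$, which is a Euclidean strip in $\mathbb{R}^2$.

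In $\tilde{C}$, the lifts of $F^t_\phi$ and $F^t_\theta$ are simply straight lines through $\tilde{x}$ with unit-speed velocity vectors in directions $\phi$ and $\theta$ respectively. Elementary Euclidean geometry then gives
$$d_{\tilde{C}}\bigl(\widetilde{F^t_\theta(x)},\widetilde{F^t_\phi(x)}\bigr) \;=\; 2t\sin\!\bigl(|\theta-\phi|/2\bigr) \;\leq\; t\,|\theta-\phi|.$$
Since the composition $\tilde{C} \to C \hookrightarrow \omega$ is a local isometry, distances on $\omega$ are bounded above by distances in $\tilde{C}$. Using the hypothesis $|\theta-\phi| < \epsilon/T^2$, for $0<t<T$ we then obtain
$$d\bigl(F^t_\theta(x),F^t_\phi(x)\bigr) \;\leq\; t\,|\theta-\phi| \;<\; T\cdot\frac{\epsilon}{T^2} \;=\; \frac{\epsilon}{T}.$$

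For the final assertion, take $t = T$ and use that $T$ is the period of the closed geodesic through $x$ in direction $\phi$, so $F^T_\phi(x) = x$. Substituting gives $d(F^T_\theta(x),x) < \epsilon/T$, as claimed.

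The only subtle point, and the one requiring care, is making sure the distance in $\omega$ really is controlled by the Euclidean distance in $\tilde{C}$: this uses both that the whole $\theta$-trajectory segment stays in $C$ (so that we can lift it to $\tilde{C}$ without encountering singularities or crossing the cylinder boundary) and that inclusion/projection between $\tilde{C}$, $C$ and $\omega$ are local isometries. Once these are acknowledged, the proof is just the observation that two rays from a common point in the Euclidean plane with angular gap $\alpha$ separate at rate at most $\alpha$.
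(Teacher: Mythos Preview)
Your proof is correct and follows essentially the same approach as the paper: both arguments exploit that inside the cylinder the geometry is Euclidean (the paper phrases this as convexity of $C$ ensuring no singularity lies on the geodesic between $F^t_\theta(x)$ and $F^t_\phi(x)$, you phrase it via lifting to the strip $\tilde C$), and both then apply the identical elementary estimate $d(F^t_\theta(x),F^t_\phi(x)) = 2t\sin(|\theta-\phi|/2) \leq t|\theta-\phi| < \epsilon/T$. Your version is slightly more detailed in spelling out the lift and the local-isometry reasoning, but there is no substantive difference.
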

\begin{proof}
Since a cylinder is convex, there is no singularity along the flat geodesic inside the cylinder $C$ between $F^t_{\theta}(x)$ and $F^t_{\phi}(x)$. Consequently, 

\[d(F^t_{\theta}(x),F^t_{\phi}(x))= 2t  \sin \left(\frac{\vert \theta-\phi \vert}{2}\right) \leqslant t \vert \theta-\phi \vert < \epsilon/T.\] 
\end{proof}
%\begin{cor}\label{cor:key} Suppose $\omega$ has a cylinder $C$ in direction $\phi$ with  period $T\geq 1$. Let $\theta$ be a direction such that $|\theta-\phi|<  \epsilon/T^2$. If $F_{\theta}^t(x) $ is inside $C$ for all $0<t<T $ then 
%$d(F^t_{\theta}(x),F^t _{\phi}(x))<\epsilon/T$. In particular, $d(F^{T}_\theta (x), x) < \epsilon/T$. 
%\end{cor}
\begin{proof}[Proof of Proposition \ref{prop:suff cyl}] 
Let $\delta>0$, and $N$ be given. By our assumption of the proposition there exists a cylinder of length $T>N$, area at least $c$ and direction within $\frac{\delta}{4T}$ of the vertical. At least $c-\delta$ of the points in the cylinder satisfy the assumption of Lemma \ref{lem:key}. Let $S_T$ be the set of these points. By Lemma \ref{lem:key} we have
 $\int_{S_T}|d(F^{T}(x),x)|d\lambda_\omega(x)<\frac{\delta}{T}<\delta.$ Choosing $N_i$ going to infinity and obtaining corresponding $T_i$, we obtain a $\frac c 2$ partial rigidity sequence $T_i$ with corresponding sets $S_{T_i}$.
\end{proof}
We next paraphrase a result \cite{MTW} which follows from that statement by choosing the identity function to be the dimension function and the fact that $|\sin(\theta)|<2|\theta|$ for all $\theta$ close enough to 0.
\begin{thm}(\cite[Theorem 6.1 (b)]{MTW}) Let $f:[0,\infty)\to (0,\infty)$ be non-increasing and have $\int_C^\infty r f(r)dr=\infty$ for all $C>0$. For every  translation surface there exists $a>0$ (depending only on genus) so that and almost every $\theta$ there exists infinitely many $L$ so that there is a cylinder of length $L$, area at least $c$ in direction $\psi$ where $|\psi-\theta|< 2\frac{f(L)}{L}$.
\end{thm}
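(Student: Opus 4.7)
The plan is to interpret the statement as a metric Diophantine approximation theorem on the circle of directions and to prove it by a divergent Borel--Cantelli argument. For a translation surface $\omega$, cylinders of area at least $a$ give a countable collection of ``rational'' directions $\{\psi_k\}$ with associated ``denominators'' $L_k$ (the cylinder lengths); one wants to show that for almost every $\theta$ on the circle, the set of $k$ with $|\psi_k - \theta| < 2 f(L_k)/L_k$ is infinite.

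First I would fix the constant $a > 0$ to be small enough (depending only on the genus) that on every surface in the relevant stratum the Siegel--Veech counting function for cylinders of area at least $a$ admits a quadratic lower bound; such an $a$ exists by combining Masur's non-divergence of Teichm\"uller geodesics with the Eskin--Masur/Vorobets Siegel--Veech formalism, yielding $\#\{\text{cylinders of area} \ge a, \text{length} \le R\} \asymp R^2$ uniformly. At dyadic scale $L=2^n$, let
\[
A_n \;=\; \{\theta \in [0, 2\pi) : \exists\, k \text{ with } L_k \in [L, 2L] \text{ and } |\psi_k - \theta| < 2f(L_k)/L_k\}.
\]
Using the quadratic count and the equidistribution of the $\psi_k$ along the circle (a consequence of the $SL_2(\R)$-equivariance of the cylinder spectrum, via the orbit-closure results of Eskin--Mirzakhani--Mohammadi), one shows that $\sum_n \lambda(A_n)$ diverges precisely when $\int_C^\infty r f(r)\,dr = \infty$, which is the hypothesis (the monotonicity of $f$ is used here to compare the dyadic sum with the integral).

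The crucial step --- and the main obstacle --- is the \emph{quasi-independence} of the events $A_n$ for distinct $n$, required by the Chung--Erd\H{o}s divergent Borel--Cantelli lemma: one must establish $\lambda(A_n \cap A_m) \lesssim \lambda(A_n)\lambda(A_m) + \textrm{(error)}$ for $n \neq m$. This reduces to a pair-correlation bound on cylinders of differing lengths, namely controlling the number of ordered pairs of cylinders with $(L_C, L_{C'}) \approx (2^n, 2^m)$ whose directions both fall within the relevant angular window of a common $\theta$; such bounds come from higher-moment Siegel--Veech transforms in the spirit of Athreya--Cheung--Masur and Dozier, and they rest ultimately on the Eskin--Masur quadratic upper bound $\#\{\text{cylinders of length}\le R\} \le cR^2$. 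Once quasi-independence is in hand, $\lambda(\limsup_n A_n) = 1$ follows, and the $|\theta| \le 2|\sin(\theta/2)|$ inequality explains the factor $2$ in $2f(L)/L$. The fact that $a$ and the Siegel--Veech constants depend only on the genus is what allows the conclusion to hold for \emph{every} translation surface rather than just almost every one.
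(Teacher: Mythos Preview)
The paper does not prove this statement: it is quoted from \cite{MTW}, the only comment being that the present formulation follows from \cite[Theorem~6.1(b)]{MTW} by taking the identity as the dimension function and using $|\sin\theta|\le 2|\theta|$. There is therefore no proof in the paper to compare your proposal against; you are attempting to reprove the cited result from scratch.

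Your architecture --- divergent Borel--Cantelli on dyadic length scales, with quasi-independence of the events $A_n$ --- is the standard shape of a Khinchin-type divergence theorem and is indeed the strategy of \cite{MTW}. However, two of your steps are not justified as written. First, invoking Eskin--Mirzakhani--Mohammadi for ``equidistribution of the $\psi_k$ along the circle'' is both unnecessary and at odds with this paper, which explicitly emphasizes that it does not rely on the measure classification; no equidistribution of cylinder directions is needed to obtain $\sum_n\lambda(A_n)=\infty$, only a quadratic lower count of cylinders of area $\ge a$ at each scale (and overlap is handled by the same pairwise estimate you need anyway). Second, and more seriously, the quasi-independence bound $\lambda(A_n\cap A_m)\lesssim \lambda(A_n)\lambda(A_m)$ is the substance of the theorem, and you have only named it. The higher-moment Siegel--Veech results you invoke (Athreya--Cheung--Masur, Dozier) are statements in average over an $SL_2(\mathbb{R})$-invariant measure on moduli space; they do not yield the \emph{pointwise}, every-surface pair-correlation estimate you need here. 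In \cite{MTW} this step is carried out by direct geometric arguments on a fixed surface, and it is where the real work lies. As written, your proposal identifies the right framework but defers precisely the step that constitutes the proof.
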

\begin{cor}For every surface $\omega$ there exists $c>0$ so that for every $\epsilon>0, N$, for almost every $\theta$ there exists a cylinder with area at least $c$, length $L>N$ and direction $\psi$ so that $|\psi-\theta|<\frac \epsilon {L^2}$. 
\end{cor}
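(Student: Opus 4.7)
The plan is to deduce the corollary directly from the cited Masur–Tabachnikov–Weiss theorem by choosing a well-tailored weight function $f$. The conclusion of \cite[Theorem 6.1 (b)]{MTW} gives a bound of the form $|\psi-\theta| < 2f(L)/L$ on the angle approximation, so to upgrade this to the bound $|\psi-\theta| < \epsilon/L^2$ required here it suffices to arrange $2f(L)/L < \epsilon/L^2$, i.e., $f(L) < \epsilon/(2L)$, for all sufficiently large $L$, while still keeping the integrability hypothesis $\int_C^\infty r f(r)\, dr = \infty$ that the theorem demands.

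First, I would fix $f(r) = \tfrac{1}{r \log r}$ for $r$ larger than some threshold (say $r \geq 3$), extended in any reasonable non-increasing, strictly positive way on $[0,\infty)$. This $f$ is non-increasing, and the change of variables $u = \log r$ gives
\[
\int_{C}^{\infty} r f(r)\, dr = \int_{C}^{\infty} \frac{dr}{\log r} = \infty
\]
for every $C>0$, so the hypothesis of \cite[Theorem 6.1 (b)]{MTW} is satisfied.

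Second, I would apply the theorem to obtain the constant $c>0$ (depending only on the genus) and, for almost every $\theta$, infinitely many lengths $L$ along with cylinders of area at least $c$, length $L$, and direction $\psi$ obeying
\[
|\psi - \theta| < \frac{2 f(L)}{L} = \frac{2}{L^2 \log L}.
\]
Third, given an arbitrary $\epsilon>0$ and $N$, I would choose $L_0 > \max\{N, e^{2/\epsilon}\}$ and then pick, among the infinitely many good $L$ produced above, one with $L > L_0$. For this $L$ we have $L > N$, while $\log L > 2/\epsilon$ forces $\tfrac{2}{\log L} < \epsilon$, and hence $|\psi-\theta| < \epsilon/L^2$, which is exactly the conclusion of the corollary.

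There is no real obstacle here: the statement is a routine specialization of the MTW theorem. The only thing to double-check is the interplay between the two bounds, namely that one can squeeze a factor of $1/L$ out of $2f(L)/L$ while keeping $\int r f(r)\, dr$ divergent. The function $f(r) = 1/(r\log r)$ is the canonical choice that lies right at the boundary of the MTW integral test and delivers the required quadratic decay.
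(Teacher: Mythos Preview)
Your argument is correct and is exactly the intended deduction: the paper states the corollary without proof, leaving it as the obvious specialization of the cited theorem, and your choice $f(r)=1/(r\log r)$ (or equally well $f(r)=\epsilon/(2r)$) is precisely how one fills in the details. Two cosmetic remarks: the reference \cite{MTW} is Marchese--Trevi\~no--Weil, not Masur--Tabachnikov--Weiss, and the divergence of $\int_C^\infty dr/\log r$ is immediate by comparison (your ``change of variables'' remark is a slight misnomer, though the conclusion is of course right).
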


\section{Open questions}

\begin{ques}Is every surface weakly illuminated by circles? Illuminated by circles?
\end{ques}
\begin{ques}Is every surface where weak mixing is typical weakly illuminated by circles? 
\end{ques}
The obvious obstruction to applying our methods is that $(p,p)$ may not be generic for $(\lambda^2)^2$ under typical $F_{\theta}\times F_{\phi}$. 
To highlight the short comings of our methods we present the following questions:
\begin{ques}Is there a surface with a sequence $t_i$ tending to infinity
 so that $\underset{i \to \infty}{\lim} \, F_{\theta}^{t_i}(p)=p$ for almost every $\theta$? Does almost every surface in genus at least 2 have this property?
\end{ques}
\begin{ques}\label{ques:iso}Is there a surface where the isomorphism class of the flow in the vertical direction has positive (or full) measure?
\end{ques}
Of course, we suspect the answer to Question \ref{ques:iso} is no, but we do not know how to prove this.

%\begin{defin}If a surface has the property that  for any $k \in \mathbb{N}$ we have \marginpar{Delete?}
%$$\{(\theta_1,...,\theta_k):F_{\theta_1}\times ...\times F_{\theta_k} \text{ is uniquely ergodic} \}$$
%has full measure then we say it is \emph{good for distribution}.
%\end{defin}
%
%Some other question occurred in the process of working on this paper:
%\begin{ques}Does every flat surface have the property that the flow in almost every direction is rigid? Is this true for Veech surfaces? Is there an orbit closure other than a Teichm\"uller curve which does not contain single cylinder surfaces?
%\end{ques}
%

\end{document}